\newtheorem{algo}{Algorithm}
\newcommand{\remove}[1]{{}}
\newcommand{\vN}{{\mathbf{N}}}
\newcommand{\vR}{{\mathbf{R}}}
\newcommand{\cB}{{\mathcal{B}}}
\newcommand{\cG}{{\mathcal{G}}}
\newcommand{\cL}{{\mathcal{L}}}
\newcommand{\cP}{{\mathcal{P}}}
\newcommand{\cT}{{\mathcal{T}}}
\newcommand{\sign}{\mathrm{sign}}
\newcommand{\prox}{\mathbf{prox}}
\DeclareMathOperator*{\argmin}{arg\,min}
\DeclareMathOperator*{\Min}{minimize}
\newcommand{\bc}{\begin{center}}
\newcommand{\ec}{\end{center}}
\newcommand{\bdm}{\begin{displaymath}}
\newcommand{\edm}{\end{displaymath}}
\newcommand{\beq}{\begin{equation}}
\newcommand{\eeq}{\end{equation}}
\newcommand{\bfl}{\begin{flushleft}}
\newcommand{\efl}{\end{flushleft}}
\newcommand{\bt}{\begin{tabbing}}
\newcommand{\et}{\end{tabbing}}
\newcommand{\beqn}{\begin{align}}
\newcommand{\eeqn}{\end{align}}
\newcommand{\beqs}{\begin{align*}} 
\newcommand{\eeqs}{\end{align*}}  
\newcommand\numberthis{\addtocounter{equation}{1}\tag{\theequation}}
\DeclarePairedDelimiter{\dotp}{\langle}{\rangle}
\newif\iftechreport 
\def\cut#1{{}}
\title{An $O(n\log(n))$ Algorithm for Projecting Onto the Ordered Weighted $\ell_1$ Norm Ball\thanks{This work is supported in part by NSF grant DMS-1317602.
}}
\author{Damek Davis}
\institute{D. Davis \at
              Department of Mathematics, University of California, Los Angeles\\
              Los Angeles, CA 90025, USA\\
              \email{damek@ucla.edu}}
\date{\today}
\journalname{Report} 
\begin{document}

\maketitle
\abstract{
The ordered weighted $\ell_1$ (OWL) norm is a newly developed generalization of the Octogonal Shrinkage and Clustering Algorithm for Regression (OSCAR) norm. This norm has  desirable statistical properties and can be used to perform simultaneous clustering and regression. In this paper, we show how to compute the projection of an $n$-dimensional vector onto the OWL norm ball in $O(n\log(n))$ operations. In addition, we illustrate the performance of our algorithm on a synthetic regression test. 

}





\section{Introduction}

Sparsity is commonly used as a model selection tool in statistical and machine learning problems. For example, consider the following Ivanov regularized (or constrained) regression problem:
\begin{align}\label{eq:l0}
\Min_{x \in \vR^n} &\; \frac{1}{2}\|Ax - b\|^2 \quad \quad\text{subject to:} \; \|x\|_0\leq \varepsilon.
\end{align}
where $m, n >0$ are integers, $\varepsilon > 0$ is a real number, $A \in \vR^{m \times n}$ and $b \in \vR^m$ are given, and $\|x\|_0$ is the number of nonzero components of a vector $x\in \vR^n$. Solving~\eqref{eq:l0} yields the ``best" predictor $x$ with fewer than $\varepsilon$ nonzero components. Unfortunately,~\eqref{eq:l0} is nonconvex and NP hard~\cite{l0hard}. Thus, in practice the following convex surrogate (LASSO) problem is solved instead (see e.g.,~\cite{compsens}):
\begin{align}\label{eq:l1}
\Min_{x \in \vR^n} &\; \frac{1}{2}\|Ax - b\|^2 \quad \quad\text{subject to:} \; \|x\|_1\leq \varepsilon
\end{align}
where $\|x\|_1 = \sum_{i=1}^n |x_i|$. 

Recently, researchers have moved beyond the search for sparse predictors and have begun to analyze ``group-structured" sparse predictors~\cite{bach2012}.  These models are motivated by a deficiency of~\eqref{eq:l0} and~\eqref{eq:l1}: they yield a predictor with a small number of nonzero components, but they fail to identify and take into account similarities between features. In other words, group-structured predictors simultaneously cluster and select groups of features for prediction purposes. Mathematically, this behavior can be enforced by replacing the $\ell_0$ and $\ell_1$ norms in~\eqref{eq:l0} and~\eqref{eq:l1} with new regularizers. Typical choices for group-structured regularizers include the Elastic Net~\cite{RSSB:RSSB503} (EN), Fused LASSO~\cite{RSSB:RSSB490},  Sparse Group LASSO~\cite{doi:10.1080/10618600.2012.681250}, and Octogonal Shrinkage and Clustering Algorithm for Regression~\cite{BIOM:BIOM843} (OSCAR). The EN and OSCAR regularizers have the benefit of being invariant under permutations of the components of the predictor and do not require prior specification of the desired groups of features (when a clustering is not known \textit{a priori}). However, OSCAR has been shown to outperform EN regularization in feature grouping~\cite{BIOM:BIOM843,6238378}. This has motivated the recent development of the ordered weighted $\ell_1$ norm~\cite{bogdan2013statistical,zeng2014decreasing} (OWL) (see~\eqref{eq:OWL} below), which includes the OSCAR, $\ell_1$, and $\ell_\infty$ norms as a special case. 

\textbf{Related work.} Recently, the paper~\cite{zeng2014ordered} investigated the properties of the OWL norm, discovered the atomic norm characterization of the OWL norm, and developed an $O(n\log(n))$ algorithm for computing its proximal operator (also see~\cite{bogdan2013statistical} for the computation of the proximal operator). Using the atomic characterization of the OWL norm, the paper~\cite{zeng2014ordered} showed how to apply the Frank-Wolfe conditional gradient algorithm (CG)~\cite{FWalg} to the Ivanov regularized OWL norm regression problem. However, when more complicated, and perhaps, nonsmooth data fitting and regularization terms are included in the Ivanov regularization model, the Frank-Wolfe algorithm can no longer be applied. If we knew how to quickly project onto the OWL norm ball, we could apply modern proximal-splitting algorithms~\cite{combettes2011proximal}, which can perform better than CG for OWL problems~\cite{zeng2014ordered}, to get a solution of modest accuracy quickly. Note that~\cite{zeng2014ordered} proposes a root-finding scheme for projecting onto the OWL norm ball, but it is not guaranteed to terminate at an exact solution in a finite number of steps.

\textbf{Contributions.} The paper introduces an $O(n \log(n))$ algorithm and MATLAB code for projecting onto the OWL norm ball (Algorithm~\ref{alg:basic}). Given a norm $f: \vR^n \rightarrow \vR_+$, computing the proximal map 
\begin{align*}
\prox_{ f}(z) &:= \argmin_{x \in \vR^n} f(x) + \frac{1}{2} \|x - z\|^2
\end{align*}
can be significantly easier than evaluating the projection map
\begin{align}\label{eq:normprojection}
P_{\{x \in \vR^n | f(x) \leq \varepsilon\}}(z) := \argmin_{f(x) \leq \varepsilon} \frac{1}{2}\|x - z\|^2.
\end{align}
In this paper, we devise an $O(n\log(n))$ algorithm to project onto the OWL norm ball that matches the complexity (up to constants) of the currently best performing algorithm for computing the proximal operator of the OWL norm. The algorithm we present is the first known method that computes the projection in a finite number of steps, unlike the existing root-finding scheme~\cite{zeng2014ordered}, which only provides an approximate solution in finite time. In addition, using duality (see~\eqref{eq:dualowlprox}) we immediately get an $O(n\log(n))$ algorithm for computing the proximity operator of the dual OWL norm (see~\eqref{eq:dualOwlNorm}).

The main bottleneck in evaluating the proximity and projection operators of the OWL norm arises from repeated partial sortings and averagings. Unfortunately, this seems unavoidable because even evaluating the OWL norm requires sorting a (possibly) high-dimensional vector. This suggests that any OWL norm projection algorithm requires $\Omega(n\log(n))$ operations in the worst case.

\textbf{Organization.} The OWL norm is introduced in Section~\ref{sec:defs}. In Section~\ref{sec:trans}, we reduce the OWL norm projection to a simpler problem (Problem~\ref{prob:reduced}). In Section~\ref{sec:parts}, we introduce crucial notation and properties for working with partitions. In Section~\ref{sec:alg}, we introduce the 6 alternatives (Proposition~\ref{prop:incrgamma}), which directly lead to our main algorithm (Algorithm~\ref{alg:basic}) and its complexity (Theorem~\ref{thm:convergence}). Finally, in Section~\ref{sec:numerical}, we illustrate the performance of our algorithm on a synthetic regression test. 

\section{Basic Properties and Definitions}\label{sec:defs}
We begin with the definition of the OWL norm.
\begin{definition}[The OWL Norm]
Let $n \geq 1$ and let $w \in \vR^{n}_{+}$ satisfy $w_1 \geq w_{2} \geq \cdots w_{n} \geq 0$ with $w \neq 0$.  Then for all $z \in \vR^n$, the OWL norm $\Omega_w : \vR^n \rightarrow \vR_{+}$ is given by
\begin{align}\label{eq:OWL}
\Omega_w(x) := \sum_{i=1}^n w_i |x|_{[i]}
\end{align}
where for any $x \in \vR^n$, the scalar $|x|_{[i]}$ is the $i$-th largest element of the magnitude vector $|x| := (|x_1|, \ldots, |x_n|)^T$. For all $\varepsilon > 0$, let $\cB(w, \varepsilon) := \{ x\in \vR^n \mid \Omega_w(x) \leq \varepsilon\}$ be the closed OWL norm ball of radius $\varepsilon$.
\end{definition}

Notice that when $w$ is a constant vector, we have $\Omega_w \equiv w_1 \|\cdot\|_1$.  On the other hand, when $w_1 = 1$ and $w_{i} = 0$ for $i = 2, \ldots, n$, we have $\Omega_w \equiv w_1\|\cdot \|_\infty$. Finally, given nonnegative real numbers $\mu_1$ and $\mu_2$, for all $i \in \{1, \ldots, n\}$, define $w_i = \mu_1 + \mu_2(n-i)$. Then the OSCAR norm~\cite{bogdan2013statistical} is precisely: 
\begin{align}\label{eq:OSCAR}
\Omega_w(x) = \mu_1\|x\|_1 + \mu_2\sum_{i< j} \max\{|x_i|, |x_j|\}.
\end{align}

Note that $\Omega_w$ was originally shown to be a norm in~\cite{bogdan2013statistical,zeng2014decreasing}. The paper~\cite{zeng2014decreasing} also showed that the dual norm (in the sense of functional analysis) of $\Omega_w$ has the form
\begin{align}\label{eq:dualOwlNorm}
\Omega_w^\ast(x) = \max\{ \tau_i \|x_{(i)}\|_1 \mid i = 1, \ldots, n\}
\end{align}
where $x\in \vR^n$ and for all $1 \leq j \leq n$, $\tau_j = \left(\sum_{i=1}^j w_j\right)^{-1}$ and $x_{(j)} \in \vR^j$ is a vector consisting of the $j$ largest components of $x$ (where size is measured in terms of magnitude). One interesting consequence of this fact is that for all $\gamma > 0$ and $z \in \vR^n$, we have (from \cite[Proposition 23.29]{bauschke2011convex})
\begin{align}\label{eq:dualowlprox}
\prox_{\gamma \Omega_w^\ast}(z) := \argmin_{x\in \vR^n} \left\{ \Omega_w^\ast(x) + \frac{1}{2\gamma}\|x-z\|^2\right\} = z - \gamma P_{\cB(w, 1)}\left(\frac{1}{\gamma} z\right).
\end{align}
Thus, Algorithm~\ref{alg:basic} (below) also yields an $O(n\log(n))$ algorithm for evaluating $\prox_{\gamma \Omega_w^\ast}(z)$.

\subsection{A Simplification of the OWL Norm Projection Problem}\label{sec:trans}
The following transformation (which is based on~{\cite[Lemmas 2-4]{zeng2014ordered}}) will be used as a preprocessing step in our algorithm. For convenience, we let $\odot : \vR^n \times \vR^n \rightarrow \vR^n$ denote the componentwise vector product operator. Finally, for any $z\in \vR^n$, let $\sign(z)\in \{-1, 1\}^n$ be the componentwise vector of signs of $z$ (with the convention $\sign(0) = 1$).

\begin{proposition}[Problem Reduction]\label{prop:sort}
Let $z \in \vR^n$, and let $Q(|z|)$ be the permutation matrix that sorts  $|z|$ to be in nonincreasing order. Then 
\begin{align*}
P_{\cB(w,\varepsilon)}(z) = \sign(z)\odot Q(|z|)^T P_{\cL(w, \varepsilon) \cap \cT} (Q(|z|) |z|)
\end{align*}
where $\cL(w, \varepsilon) := \{ x\in \vR^n \mid \dotp{w, x} \leq \varepsilon\}$ and $\cT := \{x \in \vR^n \mid x_1 \geq x_2 \geq \cdots x_n \geq 0 \}$.
\end{proposition}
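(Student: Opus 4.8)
The plan is to reduce the general projection to the sorted, nonnegative case in three successive steps, each using that the projection onto a nonempty closed convex set is single-valued and that $\Omega_w$ depends on $x$ only through the nonincreasing rearrangement of $|x|$; in particular $\cB(w,\varepsilon)$ is invariant under every coordinatewise sign change $x\mapsto s\odot x$, $s\in\{-1,1\}^n$, and under every coordinate permutation $x\mapsto Qx$. The two workhorses are the reverse triangle inequality $\lvert\,|a|-|b|\,\rvert\le|a-b|$ and the Hardy--Littlewood rearrangement inequality.

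\textbf{Step 1: reduce to a nonnegative vector.} I would first show $P_{\cB(w,\varepsilon)}(z)=\sign(z)\odot P_{\cB(w,\varepsilon)}(|z|)$ and that the latter is nonnegative. For any feasible $x$, the vector $\sign(z)\odot|x|$ has the same $\Omega_w$-value, hence is feasible, and coordinatewise $(\sign(z_i)|x_i|-z_i)^2=(|x_i|-|z_i|)^2\le(x_i-z_i)^2$, so $\|\sign(z)\odot|x|-z\|^2=\||x|-|z|\|^2\le\|x-z\|^2$. This, together with the observation that $P_{\cB(w,\varepsilon)}(|z|)$ is nonnegative (replacing any coordinate of a minimizer by its absolute value keeps it feasible and no farther from the nonnegative point $|z|$), identifies $\min_{x\in\cB(w,\varepsilon)}\|x-z\|^2$ with $\min_{y\in\cB(w,\varepsilon)}\|y-|z|\|^2$ and, by uniqueness of the projection, forces the claimed formula.

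\textbf{Step 2: reduce to a sorted vector.} Set $Q:=Q(|z|)$. Since $Q\cB(w,\varepsilon)=\cB(w,\varepsilon)$ and $Q$ is orthogonal, $\min_{x\in\cB(w,\varepsilon)}\|x-Qu\|^2=\min_{y\in\cB(w,\varepsilon)}\|y-u\|^2$ for every $u$, so $P_{\cB(w,\varepsilon)}(Qu)=QP_{\cB(w,\varepsilon)}(u)$. Combining this with Step 1 and writing $v:=Q|z|$ (which is nonnegative and already in nonincreasing order) reduces the claim to $P_{\cB(w,\varepsilon)}(v)=P_{\cL(w,\varepsilon)\cap\cT}(v)$.

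\textbf{Step 3: the minimizer is already monotone.} This is the only step with real content. For any feasible $x$, let $\tilde x$ be the vector whose entries are those of $|x|$ sorted in nonincreasing order; then $\tilde x\in\cT$ and $\Omega_w(\tilde x)=\sum_i w_i\tilde x_i=\sum_i w_i|x|_{[i]}=\Omega_w(x)\le\varepsilon$, so $\tilde x$ is feasible. Also $\|\tilde x\|=\|x\|$, and because $v\ge0$ is already sorted, the rearrangement inequality gives $\dotp{\tilde x,v}\ge\dotp{|x|,v}\ge\dotp{x,v}$, hence $\|\tilde x-v\|^2\le\|x-v\|^2$. Therefore the minimum of $\|\cdot-v\|^2$ over $\cB(w,\varepsilon)$ is attained on $\cB(w,\varepsilon)\cap\cT$, and uniqueness of the projection puts $P_{\cB(w,\varepsilon)}(v)$ itself in $\cT$. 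Since every $x\in\cT$ satisfies $|x|_{[i]}=x_i$, on $\cT$ we have $\Omega_w(x)=\dotp{w,x}$, so $\cB(w,\varepsilon)\cap\cT=\cL(w,\varepsilon)\cap\cT$ and $P_{\cB(w,\varepsilon)}(v)=P_{\cB(w,\varepsilon)\cap\cT}(v)=P_{\cL(w,\varepsilon)\cap\cT}(v)$, which completes the proof. The point to get right is that passing to the sorted rearrangement preserves the $\Omega_w$-value exactly while only decreasing the distance to $v$; everything else is bookkeeping around the sign and permutation invariances of $\Omega_w$.
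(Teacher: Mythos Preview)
Your proof is correct and follows essentially the same route as the paper: both reduce via the sign and permutation invariance of $\Omega_w$ to the case of a sorted nonnegative input, and then identify $P_{\cB(w,\varepsilon)}(v)$ with $P_{\cL(w,\varepsilon)\cap\cT}(v)$. The only notable difference is that the paper defers this last identification to \cite[Lemmas~2--4]{zeng2014ordered}, whereas you supply a self-contained argument via the Hardy--Littlewood rearrangement inequality (passing from $x$ to the nonincreasing rearrangement of $|x|$ preserves $\Omega_w$ and the norm while increasing $\dotp{\cdot,v}$, hence decreasing the distance to $v$). Your Step~3 is thus more explicit than the paper's, but the underlying idea is the same.
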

\begin{proof}
Note that $\Omega_w(\sign(z)\odot Q(|z|)x) = \Omega_w(x)$ for all $x \in \vR^n$. Thus, 
\begin{align*}
&P_{\cB(w, \varepsilon)} (Q(|z|) |z|) = P_{\cB(w,\varepsilon)}(\sign(z) \odot Q(|z|)z) = \argmin_{\Omega_w(x) \leq \varepsilon} \frac{1}{2}\left\|\sign(z) \odot Q(|z|)z - x\right\|^2 \\
&\hspace{20pt}= \argmin_{\Omega_w(x) \leq \varepsilon} \frac{1}{2}\left\|z - \sign(z) \odot Q(|z|)^Tx\right\|^2 = \sign(|z|)\odot Q(|z|)P_{\cB(w, \varepsilon)}( z).
\end{align*}
Thus, we have shown that for general vectors $z \in \vR^n$, we have $P_{\cB(w,\varepsilon)}(z) = \sign(z) \odot Q(|z|)^TP_{\cB(w,\varepsilon)}(Q(|z|)|z|) $. Finally, the result follows from the equality $P_{\cB(w,\varepsilon)}(Q(|z|)|z|) = P_{\cL(w, \varepsilon)\cap \cT}(Q(|z|)|z|)$.
%
%
\end{proof}

Thus, whenever $z \in \cT$, projecting onto the OWL norm ball is equivalent to projecting onto the set intersection $\cL(w, \varepsilon)\cap \cT$:
\begin{align*}
P_{\cB(w,\varepsilon)}(z) =&  \argmin_{x\in \vR^n} \frac{1}{2}\|x - z\|^2 \quad \text{subject to: } \; \sum_{i = 1}^n w_i x_i \leq \varepsilon \text{ and } x_1 \geq x_2 \geq \cdots x_n \geq 0.
\end{align*}

Finally, we make one more reduction to the problem, which is based on the following simple lemma.
\begin{lemma}
Let $z, w \in \cT$ and suppose that $w \neq 0$. If $\dotp{z, w} \leq \varepsilon$, then $P_{\cB(w,\varepsilon)}(z) = z$.  Otherwise, $\dotp{P_{\cB(w,\varepsilon)}(z), w} = \varepsilon$.
\end{lemma}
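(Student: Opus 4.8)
The plan is to use the standard characterization of Euclidean projection onto a closed convex set via the normal cone, or equivalently to argue directly about the feasibility and optimality of $z$ itself. First I would handle the easy case: if $\dotp{z,w} \leq \varepsilon$, then since $z \in \cT$ we have $\Omega_w(z) = \dotp{z,w} \leq \varepsilon$ (using that both $z$ and $w$ are already sorted in nonincreasing order, so no rearrangement is needed in the definition~\eqref{eq:OWL}), hence $z \in \cB(w,\varepsilon)$. Since projection onto a convex set fixes every point of that set, $P_{\cB(w,\varepsilon)}(z) = z$.

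For the second case, suppose $\dotp{z,w} > \varepsilon$, and write $\barx := P_{\cB(w,\varepsilon)}(z)$. I would first observe that $\barx \neq z$ (otherwise $z$ would be feasible, contradicting $\dotp{z,w} > \varepsilon$ by the same reasoning as above, since $\Omega_w(z) = \dotp{z,w}$ when $z \in \cT$). So $\Omega_w(\barx) \leq \varepsilon$ but $\barx \neq z$. The claim $\dotp{\barx, w} = \varepsilon$ will follow from showing that the active constraint $\Omega_w(x) \leq \varepsilon$ is tight at the optimum, together with the fact that $\barx \in \cT$ so that $\Omega_w(\barx) = \dotp{\barx, w}$ — but I should be careful: Proposition~\ref{prop:sort} gives $P_{\cB(w,\varepsilon)}(z) = P_{\cL(w,\varepsilon)\cap\cT}(z)$ when $z \in \cT$, so indeed $\barx \in \cT$ and $\Omega_w(\barx) = \dotp{\barx,w}$, reducing the claim to $\dotp{\barx,w}=\varepsilon$.

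To prove tightness, I would argue by contradiction: suppose $\Omega_w(\barx) < \varepsilon$, i.e. $\barx$ lies in the interior of the ball (relative to $\cT$ it still has slack in the $\cL$ constraint). Consider the segment $x_t = \barx + t(z - \barx)$ for small $t \in (0,1]$. Since $\cT$ is a convex cone containing both $\barx$ and $z$, each $x_t \in \cT$; and since $t \mapsto \dotp{x_t, w}$ is continuous with value $\dotp{\barx,w} < \varepsilon$ at $t=0$, for $t$ small enough we still have $\dotp{x_t,w} < \varepsilon$, so $x_t$ is feasible. But $\|x_t - z\| = (1-t)\|\barx - z\| < \|\barx - z\|$ since $\barx \neq z$, contradicting optimality of $\barx$. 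Hence $\Omega_w(\barx) = \dotp{\barx, w} = \varepsilon$.

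The main (very minor) obstacle is just being careful that $z \in \cT$ implies $\Omega_w(z) = \dotp{z,w}$, which is immediate from the definition of the OWL norm once both vectors are sorted in nonincreasing order, and invoking Proposition~\ref{prop:sort} to know that the projection stays in $\cT$. Everything else is the textbook fact that for a convex constrained least-squares problem with a single active inequality, the constraint is tight at the optimum whenever the unconstrained minimizer is infeasible. An alternative, equally short route is to write the KKT/normal-cone conditions for the projection and read off that the multiplier for $\dotp{x,w}\leq\varepsilon$ must be positive (else $\barx = z$), forcing complementary slackness $\dotp{\barx,w}=\varepsilon$; I would likely present the elementary segment argument above since it avoids discussing constraint qualifications.
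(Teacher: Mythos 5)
Your proof is correct. The paper states this lemma without proof (it is labelled a ``simple lemma''), so there is no argument to compare against; your two observations --- that $z\in\cT$ makes $\Omega_w(z)=\dotp{z,w}$ so the first case is immediate, and that the segment/continuity argument forces the constraint to be tight when $z$ is infeasible (using Proposition~\ref{prop:sort} to place $P_{\cB(w,\varepsilon)}(z)$ in $\cT$ so that tightness of $\Omega_w$ translates into $\dotp{P_{\cB(w,\varepsilon)}(z),w}=\varepsilon$) --- are exactly the standard argument the author is implicitly invoking.
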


We arrive at our final problem: 
\begin{problem}[Reduced Problem]\label{prob:reduced}
Given $z\in \cT$ such that $\dotp{z, w} > \varepsilon$, find 
\begin{align}\label{eq:mainproblem}
x^\ast &:= \argmin_{x\in \vR^n} \frac{1}{2}\|x - z\|^2 \quad \text{subject to: } \; \sum_{i = 1}^n w_i x_i = \varepsilon \text{ and } x_1 \geq x_2 \geq \cdots x_n \geq 0
\end{align}
\end{problem}
Now define $H(w, \varepsilon) = \{x \in \vR^n \mid \dotp{w, x} = \varepsilon\}$. Then $x^\ast = P_{H(w, \varepsilon)\cap\cT}(z)$.  

The following proposition is a straightforward exercise in convex analysis.

\begin{proposition}[KKT Conditions]\label{prop:optimality}
The point $x^\ast$ satisfies Equation~\eqref{eq:mainproblem} if, and only if, there exists $\lambda^\ast \in \vR_{++}$ and a vector $v^\ast \in \vR^n_{+}$ such that
\begin{enumerate}
\item $x^\ast \in \cT$
\item $v^\ast_i(x_i^\ast - x_{i+1}^\ast) = 0$ for $1 \leq i < n$ and $v^\ast_n x_n = 0$;
\item $x_i^\ast = z_i - \lambda^\ast w_i + v^\ast_i- v^\ast_{i-1}$ for $1 \leq i \leq n$ where $v_0^\ast := 0$;
\item and $\dotp{x^\ast, w} = \varepsilon.$
\end{enumerate}
\end{proposition}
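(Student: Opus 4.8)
The plan is to read Problem~\ref{prob:reduced} as a strictly convex quadratic program with affine constraints, write down its Karush--Kuhn--Tucker system, and then upgrade the sign of the equality multiplier to strict positivity using the standing hypothesis $\dotp{z,w}>\varepsilon$. First I would note that the feasible set $H(w,\varepsilon)\cap\cT$ is nonempty --- it contains $(\varepsilon/\|w\|^2)\,w$, since $\cT$ is a cone and $w\in\cT\setminus\{0\}$ --- as well as closed and convex, and that $x\mapsto\tfrac12\|x-z\|^2$ is strictly convex and coercive; hence $x^\ast$ exists and is unique. Because every constraint is affine, the KKT conditions are simultaneously necessary and sufficient for optimality (no constraint qualification beyond linearity is required; equivalently Slater holds for the polyhedral description obtained by splitting the equality into two inequalities).

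Next I would form the Lagrangian with a free multiplier $\lambda$ for $\dotp{w,x}=\varepsilon$, nonnegative multipliers $v_1,\dots,v_{n-1}$ for $x_i-x_{i+1}\ge 0$, and $v_n\ge 0$ for $x_n\ge 0$,
\begin{align*}
L(x,\lambda,v)=\tfrac12\|x-z\|^2+\lambda\bigl(\dotp{w,x}-\varepsilon\bigr)-\sum_{i=1}^{n-1}v_i(x_i-x_{i+1})-v_n x_n .
\end{align*}
Differentiating in $x_j$ and collecting the (at most two) terms in which $x_j$ occurs yields, with the convention $v_0:=0$, the stationarity equations $x_j-z_j+\lambda w_j-v_j+v_{j-1}=0$ for $1\le j\le n$, which is exactly condition~(3). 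Primal feasibility is conditions~(1) and~(4), dual feasibility is $v^\ast\ge 0$, and complementary slackness for the two families of inequalities is precisely condition~(2). At this point every item except the strict inequality $\lambda^\ast>0$ has been matched, for both directions of the equivalence.

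The only genuinely non-mechanical step is establishing $\lambda^\ast>0$, and I would handle it by passing to the equivalent inequality-constrained projection $\min\{\tfrac12\|x-z\|^2:\dotp{w,x}\le\varepsilon,\ x\in\cT\}$, whose minimizer coincides with $x^\ast$: by the Lemma just before Problem~\ref{prob:reduced}, the hypothesis $\dotp{z,w}>\varepsilon$ forces $\dotp{w,x^\ast}=\varepsilon$, so condition~(4) holds and the halfspace complementary slackness $\lambda^\ast(\dotp{w,x^\ast}-\varepsilon)=0$ is automatic. For this inequality problem the halfspace multiplier satisfies $\lambda^\ast\ge 0$; and if $\lambda^\ast=0$, then conditions~(1)--(3) with $\lambda^\ast=0$ are exactly the KKT conditions of $\min_{x\in\cT}\tfrac12\|x-z\|^2$, whose unique solution is $P_{\cT}(z)=z$ because $z\in\cT$ --- but then $\dotp{w,x^\ast}=\dotp{w,z}>\varepsilon$ contradicts condition~(4), so $\lambda^\ast>0$. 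The converse is immediate: if $x^\ast$ together with some $\lambda^\ast>0$, $v^\ast\ge 0$ satisfies (1)--(4), then these are valid KKT data for that (convex, affinely constrained) inequality problem, so $x^\ast$ is its minimizer and hence solves Problem~\ref{prob:reduced}. I expect the off-by-one bookkeeping in the stationarity derivation (the $v_j-v_{j-1}$ pattern, including the boundary term at $j=n$) and the $\lambda^\ast>0$ argument to be the only places that demand any care.
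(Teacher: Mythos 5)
Your proof is correct and is exactly the standard KKT argument that the paper leaves as a ``straightforward exercise in convex analysis'' (no proof is given in the text): the Lagrangian bookkeeping yields conditions (1)--(4) with $v^\ast_j - v^\ast_{j-1}$ appearing as you compute, and linearity of the constraints makes KKT both necessary and sufficient. Your treatment of the only delicate point, $\lambda^\ast>0$ --- passing to the inequality-constrained projection, invoking the lemma that $\dotp{z,w}>\varepsilon$ forces activity of the halfspace constraint, and ruling out $\lambda^\ast=0$ because it would give $x^\ast=P_\cT(z)=z$ --- is sound.
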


We now record the solution to~\eqref{eq:mainproblem} in the special case that $w$ is the constant vector.

\begin{proposition}[Projection Onto the Simplex {\cite{original_simplex}}]\label{prop:simplex}
Let $\kappa > 0$ and let $\Delta(\kappa, n)$ denote the simplex $\{x \in \vR^n \mid 0 \leq x \leq \kappa \text{ and }  \sum_{i=1}^n x_i = \kappa \}$. Let $z, w \in \cT$ and suppose that $w \neq 0$. In addition, suppose that $w_1 = w_2 = \cdots = w_n$. Then $x^\ast = P_{\Delta(\varepsilon/w_1, n)}(z)$ is the solution to Problem~\eqref{eq:mainproblem}. In other words, we can replace the constraint $x \in \cT$ with $x\in \vR_+^n$ in Problem~\eqref{eq:mainproblem}.  Furthermore, $x^\ast = \max\{z - \lambda , 0\}$ where \begin{align*}
\lambda := \frac{ \sum_{i=1}^K z_i - \varepsilon/w_1 }{K}  &&\text{and} && K:=\max\left\{k \mid \frac{ \sum_{i=1}^k z_i - \varepsilon/w_1 }{k} < z_k\right\}.
\end{align*}
\end{proposition}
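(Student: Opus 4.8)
The plan is to show two things: first, that when $w$ is constant, the constraint $x \in \cT$ in Problem~\eqref{eq:mainproblem} is redundant given the other constraints, so the problem reduces to projecting onto the simplex $\Delta(\varepsilon/w_1, n)$; and second, to recall the classical closed-form formula for that projection. For the first part, I would start from the KKT conditions in Proposition~\ref{prop:optimality}. With $w_i \equiv w_1$, condition (3) reads $x_i^\ast = z_i - \lambda^\ast w_1 + v_i^\ast - v_{i-1}^\ast$. Since $z \in \cT$ we have $z_1 \geq z_2 \geq \cdots \geq z_n$, and the term $\lambda^\ast w_1$ is a constant shift independent of $i$. The key observation is that the well-known simplex projection $y \mapsto \max\{y - \lambda, 0\}$ (with $\lambda$ chosen to meet the sum constraint) automatically preserves the ordering of a sorted input: if $z_1 \geq \cdots \geq z_n$ then $\max\{z_1 - \lambda, 0\} \geq \cdots \geq \max\{z_n - \lambda, 0\} \geq 0$. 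Hence the unconstrained-on-$\cT$ minimizer already lies in $\cT$.

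Concretely, I would argue as follows. Let $\tilde x := P_{\Delta(\varepsilon/w_1,\,n)}(z)$ be the projection of $z$ onto the simplex, ignoring the monotonicity constraint. By the cited result~\cite{original_simplex}, $\tilde x = \max\{z - \lambda, 0\}$ with $\lambda$ as in the statement, and $\tilde x \in \Delta(\varepsilon/w_1, n)$, so in particular $\sum_i w_1 \tilde x_i = \varepsilon$ and $\tilde x \geq 0$. Because $z \in \cT$, the componentwise map $t \mapsto \max\{t - \lambda, 0\}$ is nondecreasing, so $\tilde x_1 \geq \tilde x_2 \geq \cdots \geq \tilde x_n \geq 0$, i.e. $\tilde x \in \cT$. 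Therefore $\tilde x$ is feasible for Problem~\eqref{eq:mainproblem}. Conversely, any feasible point of~\eqref{eq:mainproblem} satisfies $x \geq 0$ and $\sum_i x_i = \varepsilon/w_1$, and combined with $x \in \cT$ this forces $x_i \leq \sum_{j \leq i} x_j \leq \varepsilon/w_1$, so $x \in \Delta(\varepsilon/w_1, n)$; hence the feasible set of~\eqref{eq:mainproblem} is contained in $\Delta(\varepsilon/w_1,n)$. Since $\tilde x$ minimizes $\frac12\|x - z\|^2$ over the larger set $\Delta(\varepsilon/w_1,n)$ and is itself feasible for~\eqref{eq:mainproblem}, it must also be the minimizer over the smaller feasible set. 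By uniqueness of projections onto nonempty closed convex sets, $x^\ast = \tilde x$, which is exactly the claimed formula.

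For the formula $x^\ast = \max\{z - \lambda, 0\}$ with the stated $\lambda$ and $K$, I would simply invoke~\cite{original_simplex}; alternatively one checks directly that this choice satisfies the simplex KKT conditions (nonnegativity, the sum constraint $\sum_i \max\{z_i - \lambda,0\} = \varepsilon/w_1$, and complementary slackness), with $K$ identifying the support of $x^\ast$. The definition of $K$ as $\max\{k \mid (\sum_{i=1}^k z_i - \varepsilon/w_1)/k < z_k\}$ is precisely the condition ensuring $z_K - \lambda > 0$ while $z_{K+1} - \lambda \leq 0$, so that exactly the first $K$ components remain positive after the shift.

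I expect the main (though modest) obstacle to be the verification that the feasible set of~\eqref{eq:mainproblem} coincides — as far as the minimizer is concerned — with the simplex: one must be careful that the monotonicity constraint neither adds a binding restriction at the optimum (handled by the monotonicity of the soft-thresholding map on sorted inputs) nor that dropping it enlarges the feasible set beyond $\Delta(\varepsilon/w_1,n)$ in a way that changes the answer. Once the containment $\{\text{feasible for }\eqref{eq:mainproblem}\} \subseteq \Delta(\varepsilon/w_1,n)$ and the feasibility of $\tilde x$ for~\eqref{eq:mainproblem} are both in hand, the two projections must agree, and the rest is citation.
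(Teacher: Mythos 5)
Your proposal is correct. The paper itself offers no proof of this proposition --- it is stated as a recorded fact with a citation to the classical simplex-projection result --- so there is no ``paper proof'' to compare against; what you have done is supply the one piece of reasoning the paper leaves implicit, namely that when $w$ is constant and $z \in \cT$ the monotonicity constraint $x \in \cT$ can be dropped. Your two-step argument for that is sound: (i) every feasible point of Problem~\eqref{eq:mainproblem} lies in $\Delta(\varepsilon/w_1, n)$ (indeed the upper bound $x_i \leq \varepsilon/w_1$ in the definition of $\Delta$ is already automatic from $x \geq 0$ and $\sum_i x_i = \varepsilon/w_1$, so your extra verification of it is harmless but unnecessary), and (ii) the soft-thresholding form $\max\{z - \lambda, 0\}$ of the simplex projection preserves the nonincreasing order of $z$, so the minimizer over the larger set is feasible for the smaller one and hence, by uniqueness of projections, coincides with $x^\ast$. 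This is exactly the argument one would expect the citation to stand in for, and the closed-form expression for $\lambda$ and $K$ is correctly deferred to~\cite{original_simplex}.
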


\subsection{Partitions}\label{sec:parts}
Define $\cP_n$ to be the set of partitions of $\{1, \ldots, n\}$ built entirely from intervals of integers. For example, when $n = 5$, the partition $\cG := \{\{1, 2\}, \{3\}, \{4, 5\}\}$ is an element of $\cP_5$, but $\cG' := \{\{1, 3\}, \{2, 4, 5\}\}$ is not an element of $\cP_5$ because $\{1, 3\}$ and $\{2, 4, 5\}$ are not intervals. For two partitions $\cG_1, \cG_2 \in \cP_n$, we say that $$\text{$\cG_{1} \preccurlyeq \cG_{2}$ if for all $G_1 \in \cG_{1}$, there exists $G_2 \in \cG_{2}$ with $G_1 \subseteq G_2$}.$$
Note that if $\cG_1 \preccurlyeq \cG_2$ and $\cG_2 \preccurlyeq \cG_1$, then $\cG_1 = \cG_2$. In addition, we have the following fact:
\begin{lemma}\label{eq:equalcardinality}
Let $\cG_1, \cG_2 \in \cP_n$. If $\cG_1 \preccurlyeq \cG_2$ and $|\cG_1| = |\cG_2|$, then $\cG_1 = \cG_2$. 
\end{lemma}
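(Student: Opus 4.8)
The plan is to argue by contradiction, extracting from a strict containment somewhere a counting violation. Suppose $\cG_1 \preccurlyeq \cG_2$ and $|\cG_1| = |\cG_2|$, but $\cG_1 \neq \cG_2$. Since $\cG_1 \preccurlyeq \cG_2$, the map $\varphi : \cG_1 \to \cG_2$ sending each block $G_1 \in \cG_1$ to the (unique, because the blocks of $\cG_2$ are disjoint) block $G_2 \in \cG_2$ with $G_1 \subseteq G_2$ is well defined. The key observation is that $\varphi$ is surjective: every element $i \in \{1,\dots,n\}$ lies in some block of $\cG_1$, and that block maps to the block of $\cG_2$ containing $i$, so every block of $\cG_2$ is hit. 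A surjection between finite sets of equal cardinality is a bijection, so $\varphi$ is injective as well.

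Now I would show $\varphi$ being a bijection forces $\varphi(G_1) = G_1$ for every $G_1 \in \cG_1$, which gives $\cG_1 = \cG_2$ and the desired contradiction. Since $\varphi$ is a bijection and both partitions cover $\{1,\dots,n\}$ disjointly, we have
\begin{align*}
n = \sum_{G_1 \in \cG_1} |G_1| \leq \sum_{G_1 \in \cG_1} |\varphi(G_1)| = \sum_{G_2 \in \cG_2} |G_2| = n,
\end{align*}
using $G_1 \subseteq \varphi(G_1)$ for the inequality. Hence equality holds termwise: $|G_1| = |\varphi(G_1)|$ for each $G_1$, and combined with $G_1 \subseteq \varphi(G_1)$ this yields $G_1 = \varphi(G_1)$. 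Therefore $\cG_1 = \{\varphi(G_1) : G_1 \in \cG_1\} = \cG_2$.

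I do not expect any serious obstacle here; the only point requiring a little care is confirming that $\varphi$ is well defined, i.e.\ that the block $G_2 \supseteq G_1$ is unique — this is immediate because $\cG_2$ is a partition, so its blocks are pairwise disjoint and a nonempty $G_1$ cannot sit inside two of them. Note also that the interval structure of $\cP_n$ is not actually needed for this lemma; the argument works for arbitrary partitions, though the statement is phrased within $\cP_n$ for consistency with the rest of the paper.
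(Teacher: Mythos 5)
Your argument is correct and complete: the map $\varphi$ is well defined and surjective, equal finite cardinalities make it a bijection, and the counting argument $n = \sum |G_1| \leq \sum |\varphi(G_1)| = n$ forces $G_1 = \varphi(G_1)$ termwise. The paper states this lemma without proof, so there is nothing to compare against; the only cosmetic remark is that your proof is actually direct, so the opening ``suppose for contradiction'' framing is unnecessary.
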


Suppose that we partition a vector $z \in \vR^n$ into $g$ maximal groups of nondecreasing components 
\begin{align*}
z = (\underbrace{z_1, \ldots, z_{n_1}}_{G_{1}(z)}, \underbrace{z_{n_1 + 1}, \ldots, z_{n_2}}_{G_{2}(z)}, \ldots, \underbrace{z_{n_{g-1} + 1}, \ldots, z_{n_{g}}}_{G_{g}(z)})^T
\end{align*}
where $z_{n_j} > z_{n_{j} + 1}$ for all $1 \leq j \leq g-1$, and inside the each group, $z$ is a nondecreasing list of numbers (i.e., $z_k \leq z_{k+1}$ whenever $k, k+1\in G_j(z)$ for some $j \in \{1, \cdots, g\}$). Note that $g$ can be $1$, in which case we let $n_0 = 1$.  We let 
\begin{align}\label{eq:vector-to-group}
\cG(z) := \{G_1(z), \ldots, G_g(z)\} \in \cP_n. 
\end{align} 
For example, for $z :=  (1, 4, 5,1,3)^T$, we have $\cG(z) = \{\{1, 2, 3\}, \{4, 5\}\}$, $g = 2$, $G_1(z) = \{1, 2, 3\}$, and $G_2(z) = \{4, 5\}$. Note that when $z \in \cT$, the vector $z$ is constant within each group. 

For simplicity, whenever $x^\ast$ is a solution to~\eqref{eq:mainproblem}, we define 
\begin{align}\label{eq:optimalgroup}
\cG^\ast := \cG(x^\ast).
\end{align}
Finally, for simplicity, we will also drop the dependence of the groups on $z$: $G_i := G_i(z)$.

For any vector $z \in \vR^n$ and any partition $\cG = \{G_1, \ldots, G_g\} \in \cP_n$, define an averaged vector: for all $j = 1, \ldots, g$ and $i \in G_j$, let
\begin{align}\label{eq:averagedidentity}
(z_\cG)_i := \frac{1}{|G_j|}\sum_{k \in G_j} z_k.
\end{align}
For example, for $z :=  (1, 4, 5,1,3)^T$ and $\cG := \{\{1, 2\}, \{3, 4\}, \{5\}\}$, we have $z_\cG = (5/2, 5/2, 3, 3, 5)^T$. Note that $z_\cG \in \cT$ whenever $z \in \cT$. 

The following proposition will allow us to repeatedly apply transformations to the vectors $z$ and $w$ without changing the solution to~\eqref{eq:mainproblem}. 
\begin{proposition}[Increasing Partitions]\label{prop:equivalence}
Let $z, w \in \cT$ and suppose that $w \neq 0$.
\begin{enumerate}
\item \label{prop:equivalence:part:1} Suppose that $\lambda^\ast \geq \lambda$ (where $\lambda^\ast$ is as in Proposition~\ref{prop:optimality}). Then we have $$\cG(z) \preccurlyeq \cG(z - \lambda w) = \cG(z_{\cG(z - \lambda w)} )\preccurlyeq \cG^\ast.$$
\item \label{prop:equivalence:part:2} We have $x^\ast = P_{H(w_\cG, \varepsilon)\cap \cT}(z_\cG)$ whenever $\cG \preccurlyeq \cG^\ast$. 
\end{enumerate}
\end{proposition}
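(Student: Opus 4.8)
The plan is to treat the two items separately, leaning on two elementary facts about $\cT$: a vector in $\cT$ is constant on each of its own groups and strictly decreases across consecutive group boundaries; and if a vector $y$ is nondecreasing on an interval $I$, then $I$ lies inside a single group of $\cG(y)$ (a group boundary is by definition a position of strict decrease, and $I$ contains none). The other tool is the KKT system of Proposition~\ref{prop:optimality}. I take $\lambda\ge 0$ throughout; together with the hypothesis $\lambda\le\lambda^\ast$ this is exactly how the proposition is used, and the first inclusion below genuinely needs $\lambda\ge 0$.

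For item~\ref{prop:equivalence:part:1} I would prove the three relations in the displayed chain one at a time. First, $\cG(z)\preccurlyeq\cG(z-\lambda w)$: on a group of $\cG(z)$ the vector $z$ is constant while $\lambda w$ is nonincreasing (as $w\in\cT$, $\lambda\ge 0$), so $z-\lambda w$ is nondecreasing there and that group sits in a single group of $\cG(z-\lambda w)$. Next, setting $\cH:=\cG(z-\lambda w)$, the equality $\cH=\cG(z_\cH)$: the vector $z_\cH$ is constant on each block of $\cH$, so $\cH\preccurlyeq\cG(z_\cH)$; conversely, for consecutive blocks $H,H'$ of $\cH$ the averages of $z$ over them satisfy $\overline{z}_H\ge\overline{z}_{H'}$ because $z\in\cT$ is nonincreasing, and $\overline{z}_H=\overline{z}_{H'}$ would force $z$ constant on $H\cup H'$, making $z-\lambda w$ nondecreasing across the $H\,|\,H'$ boundary and contradicting that this is a genuine boundary of $\cH$; hence the block averages strictly decrease across every boundary of $\cH$, so $\cG(z_\cH)$ has precisely the boundaries of $\cH$ and equals $\cH$ (alternatively invoke the antisymmetry of $\preccurlyeq$ noted before Lemma~\ref{eq:equalcardinality}). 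Finally, $\cH\preccurlyeq\cG^\ast$: it suffices that every boundary of $\cG^\ast=\cG(x^\ast)$ is a boundary of $\cH$, so suppose $x_k^\ast>x_{k+1}^\ast$ with $k<n$; then complementary slackness gives $v_k^\ast=0$, and subtracting the $(k{+}1)$-st from the $k$-th stationarity equation of Proposition~\ref{prop:optimality} yields
\[
0 < x_k^\ast - x_{k+1}^\ast = (z_k-z_{k+1}) - \lambda^\ast(w_k-w_{k+1}) - v_{k-1}^\ast - v_{k+1}^\ast \le (z_k-z_{k+1}) - \lambda^\ast(w_k-w_{k+1}),
\]
and since $w_k\ge w_{k+1}$ and $0\le\lambda\le\lambda^\ast$ this gives $(z-\lambda w)_k-(z-\lambda w)_{k+1}=(z_k-z_{k+1})-\lambda(w_k-w_{k+1})>0$ (split into $w_k=w_{k+1}$ and $w_k>w_{k+1}$).

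For item~\ref{prop:equivalence:part:2}, fix $\cG\preccurlyeq\cG^\ast$ and put $y^\star:=P_{H(w_\cG,\varepsilon)\cap\cT}(z_\cG)$. I would establish: (a) for all $u,v$, $\dotp{u_\cG,v}=\dotp{u,v_\cG}=\dotp{u_\cG,v_\cG}$ (one line from the definition), and for $x$ constant on the blocks of $\cG$ the vector $x-z_\cG$ is block-constant and $z_\cG-z$ has zero sum on each block, so $\|x-z\|^2=\|x-z_\cG\|^2+\|z_\cG-z\|^2$; (b) $y^\star$ is itself constant on the blocks of $\cG$, because $y^\star_\cG\in\cT$, $\dotp{w_\cG,y^\star_\cG}=\dotp{w_\cG,y^\star}=\varepsilon$ by (a), and $\|y^\star_\cG-z_\cG\|\le\|y^\star-z_\cG\|$ by the same block orthogonality (writing $y^\star-z_\cG=(y^\star-y^\star_\cG)+(y^\star_\cG-z_\cG)$), so uniqueness of the projection forces $y^\star_\cG=y^\star$. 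Combining (a) and (b), minimizing $\tfrac12\|x-z_\cG\|^2$ over $H(w_\cG,\varepsilon)\cap\cT$ is, up to the additive constant $\tfrac12\|z_\cG-z\|^2$, the same as minimizing $\tfrac12\|x-z\|^2$ over $\{x\in H(w,\varepsilon)\cap\cT : x \text{ constant on the blocks of }\cG\}$. But $\cG\preccurlyeq\cG^\ast=\cG(x^\ast)$ forces $x^\ast$ to be constant on the blocks of $\cG$, so $x^\ast$ is feasible for this restricted problem; being optimal for Problem~\ref{prob:reduced}, i.e.\ over the larger set $H(w,\varepsilon)\cap\cT$, it is a fortiori optimal over the restricted set, and by uniqueness $x^\ast=y^\star$.

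The step I expect to be the main obstacle is part (b) of item~\ref{prop:equivalence:part:2}: the whole reduction hinges on knowing that the projection of an averaged vector onto $H(w_\cG,\varepsilon)\cap\cT$ is again averaged, and without it the argument does not close; the rest there is bookkeeping with the Pythagorean identity. In item~\ref{prop:equivalence:part:1} the delicate points are ruling out equal consecutive block averages in the middle equality — including the degenerate case $\lambda=0$, where one uses that the blocks of $\cH=\cG(z)$ are genuinely separated — and keeping the boundary conventions $v_0^\ast=0$ and $v_n^\ast x_n^\ast=0$ straight in the KKT manipulation.
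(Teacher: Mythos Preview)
Your proof is correct and follows essentially the same route as the paper: the KKT subtraction for Part~\ref{prop:equivalence:part:1} and the block-orthogonality (Pythagorean) decomposition for Part~\ref{prop:equivalence:part:2} are the same ideas, just phrased in ``boundary'' language rather than through the pairwise Lemma~\ref{lem:equivSucc}. The one substantive variation is your step~(b): you show $y^\star$ is block-constant by observing that $y^\star_\cG$ is itself feasible for $H(w_\cG,\varepsilon)\cap\cT$ and at least as close to $z_\cG$, whereas the paper obtains the same conclusion by applying Part~\ref{prop:equivalence:part:1} (with $\lambda=0$) to the modified instance $(z_\cG,w_\cG)$ to get $\cG(z_\cG)\preccurlyeq\cG(x^0)$; your direct averaging argument is self-contained and sidesteps the paper's slightly loose identification $\cG=\cG(z_\cG)$, which need not hold for an arbitrary $\cG\preccurlyeq\cG^\ast$.
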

\begin{proof}
\iftechreport
  See Appendix~\ref{app:prop:equivalence}.
\else
  See the supplementary material. 
\fi
\iftechreport
  \qed
\else 
\fi 
\end{proof}

\section{The Algorithm}\label{sec:alg}

The following proposition is the workhorse of our algorithm. It provides a set of 6 alternatives, three of which give a solution when true; the other three allow us to update the vectors $z$ and $w$ so that $\cG(z)$ strictly decreases in size, while keeping $x^\ast$ fixed. Clearly, the size of this partition must always be greater than 0, which ensures that our algorithm terminates in a finite number of steps.
\begin{proposition}[The 6 Alternatives]\label{prop:incrgamma}
Let $z, w \in \cT$. Suppose that $w \neq 0$, that $\dotp{w, z} > \varepsilon$, and  $w = w_{\cG(z)}$. Let $$r := \min\left\{\frac{z_i - z_{i+1}}{w_i - w_{i+1}} \mid i = 1, \ldots, n-1\right\}$$ where we use the convention that $0/0 = \infty$. Define 
\begin{align*}
\lambda_0 := \frac{\left(\sum_{\{i \mid z_i > z_n\}} z_i w_i\right) - \varepsilon}{\sum_{\{i \mid z_i > z_n\}} w_i^2} && \text{and} && \lambda_1 := \frac{\dotp{z, w} - \varepsilon}{\|w\|^2}.
\end{align*}
Then $\lambda^\ast \geq \lambda_1$ (where $\lambda^\ast$ is as in Proposition~\ref{prop:optimality}).

Let $n' := \min\left(\{ k \mid z_k - \lambda_0 w_k < 0\}\cup \{n+1\}\right)$.  Then one of the following mutually exclusive alternatives must hold: 
\begin{enumerate}
\item \label{prop:incrgamma:simplex} If $r = \infty$, we have $x^\ast = P_{\Delta(\varepsilon/w_1, n)}(z)$.
\item \label{prop:incrgamma:lambda_1} If $\lambda_1 > r$, then $\lambda^\ast \geq \lambda_1 > r$.
\item \label{prop:incrgamma:lambda_1z_n}If $\lambda_1 \leq r< \infty$ and $z_n - \lambda_1 w_{n} \geq 0$, then $x^\ast = z - \lambda_1 w$.
\item \label{prop:incrgamma:lambda_1z_nlambda_0g} If $\lambda_1 \leq r < \infty $, $z_n - \lambda_1 w_{n} < 0$ and $\lambda_0 > r$, then $\lambda^\ast \geq \lambda_0 > r$.
\item \label{prop:incrgamma:lambda_1z_nlambda_0l} If $\lambda_1 \leq r< \infty$, $z_n - \lambda_1 w_{n} < 0$, $\lambda_0 \leq r$, and $n' \leq n$ with $z_{n'} = z_n$, then $x^\ast = \max\{z - \lambda_0w, 0\}$.
\item  \label{prop:incrgamma:G_0} If $\lambda_1 \leq r < \infty$, $z_n - \lambda_1 w_{n} < 0$, $\lambda_0 \leq r$, and   $n' < n$ with  $z_{n'} \neq z_n$, then $\cG_0 \preccurlyeq \cG^\ast$ where $\cG_0 = \{G \in \cG(z) \mid \max(G) < n'\} \cup \{ \{n', \ldots, n\}\}$.
\item \label{prop:incrgamma:allfail} It cannot be the case that $\lambda_1 \leq r < \infty $, $z_n - \lambda_1 w_{n} < 0$, $\lambda_0 \leq r$, and  $n' = n+1$.
\end{enumerate}
In addition, whenever $\lambda^\ast \geq \lambda \geq r$, we have $\cG(z - \lambda w) \preccurlyeq \cG^\ast$ and $|\cG(z_{\cG(z - \lambda w)})| \leq |\cG(z)| - 1$. Similarly, when~\ref{prop:incrgamma:G_0} holds, we have $\cG_0 \in \cP_n$, $\cG(z) \preccurlyeq \cG_0 = \cG(z_{\cG_0}) \preccurlyeq \cG^\ast$, and $|\cG(z_{\cG_0})| \leq |\cG(z)| - 1$. In particular, if $\cG(z) = \cG^\ast$, then at least one of steps~\ref{prop:incrgamma:simplex},~\ref{prop:incrgamma:lambda_1z_n}, and \ref{prop:incrgamma:lambda_1z_nlambda_0l} will not fail.
\end{proposition}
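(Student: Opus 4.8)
The plan is to verify the seven alternatives in order, exploiting the KKT characterization in Proposition~\ref{prop:optimality} together with the monotonicity result in Proposition~\ref{prop:equivalence}. The key preliminary observation is that, since $w = w_{\cG(z)}$, both $z$ and $w$ are constant within each group of $\cG(z)$, so the ratio $(z_i - z_{i+1})/(w_i - w_{i+1})$ is finite precisely at the ``breakpoints'' between consecutive groups (where $z_i > z_{i+1}$), and $r$ is the smallest such ratio; $r = \infty$ exactly when $w$ is constant across all of $\{1,\dots,n\}$, which is alternative~\ref{prop:incrgamma:simplex} and is settled immediately by Proposition~\ref{prop:simplex}. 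I would first establish the claim $\lambda^\ast \geq \lambda_1$: since $\lambda_1 = (\dotp{z,w}-\varepsilon)/\|w\|^2$ is the Lagrange multiplier one obtains by projecting $z$ onto the hyperplane $H(w,\varepsilon)$ alone (dropping the monotonicity constraint $\cT$), and adding the extra constraint can only increase the multiplier on $\dotp{x,w}=\varepsilon$, a short convex-analysis argument (or direct comparison using part~\ref{prop:equivalence:part:1} of Proposition~\ref{prop:equivalence}) gives $\lambda^\ast \geq \lambda_1$.

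Next I would dispatch the ``easy'' alternatives. For~\ref{prop:incrgamma:lambda_1}, if $\lambda_1 > r$ then $\lambda^\ast \geq \lambda_1 > r$ is immediate from the bound just proved. For~\ref{prop:incrgamma:lambda_1z_n}: when $\lambda_1 \leq r$, the vector $z - \lambda_1 w$ still lies in $\cT$ — indeed $\cG(z - \lambda_1 w) = \cG(z)$ because we have not yet merged any groups (the ratio constraint $\lambda_1 \le r$ prevents any two adjacent groups' values from crossing), and within each group $z-\lambda_1 w$ is constant; combined with $z_n - \lambda_1 w_n \geq 0$ this shows $z - \lambda_1 w \in H(w,\varepsilon)\cap\cT$, and one checks the KKT conditions of Proposition~\ref{prop:optimality} are met with $v^\ast = 0$, so $x^\ast = z - \lambda_1 w$. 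The analysis of $\lambda_0$ is analogous but on the restricted index set $\{i \mid z_i > z_n\}$: $\lambda_0$ is the multiplier from projecting onto $H(w,\varepsilon)$ while forcing the last (value-$z_n$) block to be zero. For~\ref{prop:incrgamma:lambda_1z_nlambda_0g}, if this ``restricted'' multiplier already exceeds $r$ we again conclude $\lambda^\ast \ge \lambda_0 > r$ (using that the true solution must zero out at least the bottom block once $z_n - \lambda_1 w_n < 0$, so its multiplier is at least $\lambda_0$). For~\ref{prop:incrgamma:lambda_1z_nlambda_0l}, when $\lambda_0 \leq r$ and the first sign change $n'$ satisfies $z_{n'} = z_n$, the vector $\max\{z - \lambda_0 w, 0\}$ lies in $\cT$, zeroes exactly the final block, and satisfies the KKT system, so it equals $x^\ast$.

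The heart of the argument — and the main obstacle — is alternative~\ref{prop:incrgamma:G_0} and its exclusion counterpart~\ref{prop:incrgamma:allfail}. Here $\lambda_0 \le r < \infty$ but the index $n'$ where $z - \lambda_0 w$ first goes negative lands strictly inside a group (so $z_{n'} \neq z_n$, meaning $n'$ is not at a group boundary aligned with the bottom block). I would argue that in this regime the optimal solution must have $x^\ast_i = 0$ for all $i \geq n'$: since $x^\ast \in \cT$ is nonincreasing and nonnegative, and since feasibility plus the KKT stationarity $x_i^\ast = z_i - \lambda^\ast w_i + v_i^\ast - v_{i-1}^\ast$ forces (via a telescoping/averaging argument over the tail) that the tail average of $z - \lambda^\ast w$ cannot be positive, one shows $\lambda^\ast$ is large enough that the whole block from $n'$ to $n$ collapses to zero — hence these indices form a single group in $\cG^\ast$, giving $\cG_0 \preccurlyeq \cG^\ast$. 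The exclusion in~\ref{prop:incrgamma:allfail} ($n' = n+1$, i.e.\ $z - \lambda_0 w \ge 0$ everywhere, yet $z_n - \lambda_1 w_n < 0$ with $\lambda_0 \le r$) I would derive as a contradiction: $\lambda_0 \le r$ forces $z_n - \lambda_0 w_n \ge z_n - r w_n$, but one can show $z_n - \lambda_1 w_n < 0$ together with $\lambda_1 \le \lambda_0$ (which follows because $\lambda_0$'s defining sum omits the nonnegative contribution of the bottom block while also shrinking the denominator — a careful inequality comparing the two Rayleigh-type quotients) would force $z_n - \lambda_0 w_n < 0$, contradicting $n' = n+1$; establishing $\lambda_1 \le \lambda_0$ cleanly is the delicate computational point.

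Finally, the ``In addition'' clauses: whenever $\lambda^\ast \geq \lambda \geq r$, part~\ref{prop:equivalence:part:1} of Proposition~\ref{prop:equivalence} gives $\cG(z - \lambda w) = \cG(z_{\cG(z-\lambda w)}) \preccurlyeq \cG^\ast$, and $\lambda \ge r$ means the pair of adjacent groups achieving the minimum ratio $r$ gets merged in $z - \lambda w$, so $|\cG(z - \lambda w)| \le |\cG(z)| - 1$. For case~\ref{prop:incrgamma:G_0}, $\cG_0$ is built from whole groups of $\cG(z)$ with indices below $n'$ together with the merged tail $\{n',\dots,n\}$; since $n'$ lies strictly inside a group of $\cG(z)$, that group is split and re-merged with everything after it, so at least one merge occurs and $|\cG(z_{\cG_0})| \le |\cG(z)| - 1$; that $\cG(z) \preccurlyeq \cG_0 = \cG(z_{\cG_0})$ is immediate from the construction and the fact that $z_{\cG_0}$ is constant on each block of $\cG_0$. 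The concluding remark — if $\cG(z) = \cG^\ast$ then one of steps~\ref{prop:incrgamma:simplex},~\ref{prop:incrgamma:lambda_1z_n},~\ref{prop:incrgamma:lambda_1z_nlambda_0l} succeeds — follows because the other ``progress'' alternatives~\ref{prop:incrgamma:lambda_1},~\ref{prop:incrgamma:lambda_1z_nlambda_0g},~\ref{prop:incrgamma:G_0} all strictly refine the partition (produce a $\cG$ with $\cG(z) \preccurlyeq \cG \preccurlyeq \cG^\ast$ and $|\cG| < |\cG(z)| = |\cG^\ast|$), which by Lemma~\ref{eq:equalcardinality} is impossible.
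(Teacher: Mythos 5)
Your overall route is the same as the paper's: verify each alternative through the KKT system of Proposition~\ref{prop:optimality}, treat $\lambda_1$ and $\lambda_0$ as the multipliers of restricted projections, and use Proposition~\ref{prop:equivalence} plus Lemma~\ref{eq:equalcardinality} for the partition nestings and the terminal remark. Alternatives~\ref{prop:incrgamma:simplex}--\ref{prop:incrgamma:lambda_1z_nlambda_0l} and the ``in addition'' clauses are handled essentially as in the paper (though your bound $\lambda^\ast \geq \lambda_0$ in alternative~\ref{prop:incrgamma:lambda_1z_nlambda_0g} is asserted rather than derived; the paper first shows $x_n^\ast = 0$, uses $\cG(z)\preccurlyeq\cG^\ast$ to contain the support of $x^\ast$ in $\{i \mid z_i > z_n\}$, and then repeats the $\lambda_1$ computation on that index set). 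Two steps, however, do not go through as written. In alternative~\ref{prop:incrgamma:G_0} you claim $n'$ ``lands strictly inside a group'' of $\cG(z)$ and that $\cG_0$ splits and re-merges that group. This is the opposite of what happens, and under your picture the conclusions would actually fail: if some $G\in\cG(z)$ had $\min(G) < n' \leq \max(G)$, the indices $\min(G),\dots,n'-1$ would lie in no member of $\cG_0$, so $\cG_0\notin\cP_n$ and $\cG(z)\preccurlyeq\cG_0$ would be false; moreover ``split and re-merge'' does not by itself decrease $|\cG_0|$. Since $w = w_{\cG(z)}$, the vector $z-\lambda_0 w$ is constant on each group of $\cG(z)$, so $\{k \mid z_k - \lambda_0 w_k < 0\}$ is a union of complete groups and $n'$ is necessarily the first index of a group (the paper checks this: $z_{n'-1}=z_{n'}$ would force $w_{n'-1}=w_{n'}$ and hence $z_{n'-1}-\lambda_0 w_{n'-1}<0$, contradicting minimality of $n'$). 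The cardinality drop then comes from $\{n',\dots,n\}$ absorbing at least two complete groups — the one starting at $n'$ and the one containing $n$, distinct because $z_{n'}\neq z_n$. You also still owe the proof that $x_{n'}^\ast=\cdots=x_n^\ast=0$; the paper's argument is a KKT contradiction at the first zero index $n''$ (there $v^\ast_{n''-1}=0$ forces $x^\ast_{n''-1}=z_{n''-1}-\lambda^\ast w_{n''-1}-v^\ast_{n''-2}<0$), and your ``tail average'' sketch would need to be made precise.

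In alternative~\ref{prop:incrgamma:allfail}, the inequality $\lambda_1<\lambda_0$ is the entire content, and the heuristic you offer (the $\lambda_0$ quotient omits a nonnegative numerator term while shrinking the denominator) does not establish it: deleting positive mass from both the numerator and denominator of a ratio can move it in either direction. The inequality holds only because the deleted numerator mass satisfies $\sum_{\{i\mid z_i=z_n\}} z_i w_i < \lambda_1 \sum_{\{i\mid z_i=z_n\}} w_i^2$, which is precisely the hypothesis $z_n<\lambda_1 w_n$ combined with $w$ being constant on the bottom block; substituting this into the defining identity for $\lambda_1$ gives $\lambda_1\sum_{\{i\mid z_i>z_n\}}w_i^2 < \sum_{\{i\mid z_i>z_n\}}z_iw_i-\varepsilon = \lambda_0\sum_{\{i\mid z_i>z_n\}}w_i^2$. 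Paired with $\lambda_0\geq\lambda_1$, which follows from $z_n-\lambda_0w_n\geq 0 > z_n-\lambda_1w_n$ and $w_n>0$ when $n'=n+1$, this yields the contradiction. Until these two computations are supplied, the proposal has genuine gaps at exactly the alternatives that drive the algorithm's progress.
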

\begin{proof}
\iftechreport
See Appendix~\ref{app:prop:incrgamma}.
\else
  See the supplementary material. 
\fi
\iftechreport
  \qed
\else 
\fi 
\end{proof}

We are now ready to present our algorithm. It repeatedly transforms the vectors $z$ and $w$ after checking whether Proposition~\ref{prop:incrgamma} yields a solution to~\eqref{eq:mainproblem}. Note that we assume the input is sorted and nonnegative. Thus to project onto the OWL ball with Algorithm~\ref{alg:basic}, the preprocessing in Proposition~\ref{prop:sort} must be applied first. Please see 
\iftechreport
Appendix~\ref{app:example}
\else
 the supplementary material
\fi
for an example of Algorithm~\ref{alg:basic}.

\begin{algo}[Algorithm to solve~\eqref{eq:mainproblem}]\label{alg:basic} Let $z \in \cT$,  $w \in \cT \backslash \{0\}$, and $\varepsilon \in \vR_{++}$.

{\bf Initialize:}
\begin{enumerate}
\item  $w \leftarrow w_{\cG(z)}$;
\end{enumerate}

{\bf Repeat:} 
\begin{enumerate}
\item \label{alg:basic:step:computation} Computation: 
\begin{enumerate}
\item \label{alg:basic:step:r} $r \leftarrow \min\left\{\frac{z_i - z_{i+1}}{w_i - w_{i+1}} \mid i = 1, \ldots, n-1\right\}$ (where $0/0 = \infty$);
\item Define 
\begin{align*}
\lambda_0 \leftarrow \frac{\sum_{\{i \mid z_i > z_n\}} z_i w_i - \varepsilon}{\sum_{\{i \mid z_i > z_n\}} w_i^2} && \text{and} && \lambda_1 \leftarrow \frac{\dotp{z, w} - \varepsilon}{\|w\|^2};
\end{align*}
\item $n' \leftarrow \min\left(\{ k \mid z_k - \lambda_0 w_k < 0\}\cup \{n+1\}\right)$;
\item $\cG_0(z) \leftarrow \{G \in \cG(z) \mid \max(G) < n'\} \cup \{ \{n', \ldots, n\}\}$
\end{enumerate}
\item Tests:
\begin{enumerate}
\item \label{alg:basic:satisfied} If $\dotp{z,w} \leq \varepsilon$, set
\begin{enumerate}
\item $x^\ast \leftarrow z$.
\end{enumerate}
Exit;
\item \label{alg:basic:simplex} If $r = \infty$, set
\begin{enumerate}
\item   $x^\ast \leftarrow P_{\Delta(\varepsilon/w_1, n)}(z)$.
\end{enumerate}
Exit;
\item \label{alg:basic:lambda_1} If $\lambda_1 > r$, set 
\begin{enumerate}
\item $z \leftarrow z_{\cG(z - \lambda_1 w_0)}$;
\item $w \leftarrow w_{\cG(z - \lambda_1 w_0)}$; 
\end{enumerate}
Go to step~\ref{alg:basic:step:computation}.
\item \label{alg:basic:lambda_1z_n}If $\lambda_1 \leq r< \infty$ and $z_n - \lambda_1 w_{n} \geq 0$, set 
\begin{enumerate}
\item $x^\ast \leftarrow z - \lambda_1 w$ 
\end{enumerate} 
Exit;
\item \label{alg:basic:lambda_1z_nlambda_0g} If $\lambda_1 \leq r < \infty $, $z_n - \lambda_1 w_{n} < 0$ and $\lambda_0 > r$, set
\begin{enumerate}
\item $z \leftarrow z_{\cG(z - \lambda_0 w_0)}$;
\item $w \leftarrow w_{\cG(z - \lambda_0 w_0)}$; 
\end{enumerate}
Go to step~\ref{alg:basic:step:computation}.
\item \label{alg:basic:lambda_1z_nlambda_0l} If $\lambda_1 \leq r< \infty$, $z_n - \lambda_1 w_{n} < 0$, $\lambda_0 \leq r$, and $n' \leq n$ with $z_{n'} = z_n$, set 
\begin{enumerate}
\item $x^\ast \leftarrow \max\{z - \lambda_0w, 0\}$.
\end{enumerate}
Exit;
\item  \label{alg:basic:G_0} If $\lambda_1 \leq r < \infty$, $z_n - \lambda_1 w_{n} < 0$, $\lambda_0 \leq r$, and   $n' < n$ with  $z_{n'} \neq z_n$, set
\begin{enumerate}
\item $z \leftarrow z_{\cG_0}$;
\item $w \leftarrow w_{\cG_0}$; 
\end{enumerate}
Go to step~\ref{alg:basic:step:computation}.
\end{enumerate}
\end{enumerate}

{\bf Output:} $x^\ast$.
\end{algo}

With the previous results, the following theorem is almost immediate.
\begin{theorem}
Algorithm~\ref{alg:basic} converges to $x^\ast$ in at most $n$ outer loops.
\end{theorem}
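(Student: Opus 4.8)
The plan is to use $g:=|\cG(z)|$ as a strictly decreasing monovariant: I will show that every pass through the {\bf Repeat} block either exits with the correct answer or reduces $g$ by at least one, and since $1\le g\le n$ throughout, this bounds the number of outer loops by $n$. The engine of the argument is a loop invariant asserting that, at the top of each pass, the current pair $(z,w)$ satisfies the hypotheses of Proposition~\ref{prop:incrgamma} whenever $\dotp{z,w}>\varepsilon$ -- namely $z,w\in\cT$, $w\neq0$, $w=w_{\cG(z)}$ -- and that the reduced problem~\eqref{eq:mainproblem} associated to the current data still has the solution $x^\ast$ of the original instance of Problem~\ref{prob:reduced}.

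For the base case I would check that the single initialization step restores the invariant: since the input $z$ lies in $\cT$ it is constant on each of its maximal groups, so $z_{\cG(z)}=z$ and the assignment $w\leftarrow w_{\cG(z)}$ does not change $\cG(z)$; that $w_{\cG(z)}\in\cT\setminus\{0\}$ follows because averaging a $\cT$-vector over consecutive intervals preserves nonincreasingness and keeps the leading entry positive; and part~\ref{prop:equivalence:part:1} of Proposition~\ref{prop:equivalence} with $\lambda=0$ gives $\cG(z)\preccurlyeq\cG^\ast$, so part~\ref{prop:equivalence:part:2} yields $x^\ast=P_{H(w_{\cG(z)},\varepsilon)\cap\cT}(z)$. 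For the inductive step, after the feasibility test in step~\ref{alg:basic:satisfied} is passed we have $\dotp{z,w}>\varepsilon$, so Proposition~\ref{prop:incrgamma} applies and exactly one of its seven alternatives holds. Alternatives~\ref{prop:incrgamma:simplex},~\ref{prop:incrgamma:lambda_1z_n},~\ref{prop:incrgamma:lambda_1z_nlambda_0l} are handled by the terminating steps~\ref{alg:basic:simplex},~\ref{alg:basic:lambda_1z_n},~\ref{alg:basic:lambda_1z_nlambda_0l}, which output precisely the value of $x^\ast$ identified there (using Proposition~\ref{prop:simplex} in the simplex case); alternative~\ref{prop:incrgamma:allfail} cannot occur. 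Alternatives~\ref{prop:incrgamma:lambda_1},~\ref{prop:incrgamma:lambda_1z_nlambda_0g},~\ref{prop:incrgamma:G_0} are handled by the three updating steps~\ref{alg:basic:lambda_1},~\ref{alg:basic:lambda_1z_nlambda_0g},~\ref{alg:basic:G_0}, and in each case the closing paragraph of Proposition~\ref{prop:incrgamma} supplies a partition $\cG$ with $\cG(z)\preccurlyeq\cG=\cG(z_\cG)\preccurlyeq\cG^\ast$ and $|\cG(z_\cG)|\le|\cG(z)|-1$; replacing $(z,w)$ by $(z_\cG,w_\cG)$ therefore keeps $z,w\in\cT$ and $w=w_{\cG(z)}$, preserves $x^\ast$ by part~\ref{prop:equivalence:part:2} of Proposition~\ref{prop:equivalence}, and strictly decreases $g$.

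Granting the invariant, the count is immediate. The integer $g=|\cG(z)|$ is at least $1$ (it counts the blocks of a partition of a nonempty set) and at most $n$ on entry to the first loop; each non-terminating pass decreases it by at least one, so after at most $n-1$ such passes $g=1$, meaning $z$, and hence $w=w_{\cG(z)}$, is a constant vector; then every ratio $(z_i-z_{i+1})/(w_i-w_{i+1})$ equals $0/0=\infty$, so $r=\infty$ and the pass terminates through step~\ref{alg:basic:simplex} (if step~\ref{alg:basic:satisfied} has not already fired). Hence at most $n$ outer loops are executed.

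The part I expect to require genuine care is making the loop invariant simultaneously serve its three purposes: restorable by each updating branch, yet strong enough that both the feasibility test in step~\ref{alg:basic:satisfied} is correct (when $\dotp{z,w}\le\varepsilon$ one must actually have $x^\ast=z$) and the three exit formulas return the true $x^\ast$ rather than the minimizer of a perturbed instance. The delicate point is the interplay between changing $w$ and the target: after an averaging update one should know that $\dotp{z,w}\ge\varepsilon$, with equality only if $z=x^\ast$, which I would establish either by a Chebyshev-type comparison of $\dotp{z,w}$ with $\dotp{z_\cG,w_\cG}$ on each merged block together with the KKT identity of Proposition~\ref{prop:optimality}, or -- more cleanly -- by phrasing the invariant in terms of $P_{\cB(w,\varepsilon)\cap\cT}(z)$ and using the reduction that follows Proposition~\ref{prop:sort} to collapse the ball to its boundary hyperplane whenever $\dotp{z,w}>\varepsilon$. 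Once that is settled, the monovariant bound and the seven-way case analysis are pure bookkeeping over Proposition~\ref{prop:incrgamma}.
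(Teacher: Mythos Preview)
Your proposal is correct and follows essentially the same route as the paper: maintain the invariant $\cG(z)\preccurlyeq\cG^\ast$ (with $w=w_{\cG(z)}$) via Proposition~\ref{prop:equivalence}, invoke the six alternatives of Proposition~\ref{prop:incrgamma} at each pass, and use $|\cG(z)|$ as a strictly decreasing monovariant bounded between $1$ and $n$. The paper's proof is simply a terser version of your outline; in fact you are more careful than the paper about the correctness of the feasibility exit in step~\ref{alg:basic:satisfied}, which the paper asserts without comment---your second suggestion (carrying the invariant as $x^\ast=P_{\cL(w,\varepsilon)\cap\cT}(z)$ rather than $P_{H(w,\varepsilon)\cap\cT}(z)$) is indeed the clean fix.
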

\begin{proof}
By Proposition~\ref{prop:equivalence}, $x^\ast = P_{H(w, \varepsilon) \cap \cT}(z)  = P_{H(w_{\cG(z)}, \varepsilon) \cap \cT}(z_{\cG(z)}) =   P_{H(w_{\cG(z)}, \varepsilon) \cap \cT}(z) $, so we can assume that $w = w_{\cG(z)}$ from the start. Furthermore, throughout this process $z$ and $w$ are updated to maintain that $\cG(z)\preccurlyeq G^\ast$, and so we can apply Proposition~\ref{prop:incrgamma} at every iteration. In particular, Proposition~\ref{prop:incrgamma} implies that during every iteration of Algorithm~\ref{alg:basic}, $z$ and $w$ must pass exactly one test. If tests~\ref{alg:basic:satisfied}, \ref{alg:basic:simplex},~\ref{alg:basic:lambda_1z_n}, or~\ref{alg:basic:lambda_1z_nlambda_0l} are passed, the algorithm terminates with the correct solution. If tests~\ref{alg:basic:lambda_1},~\ref{alg:basic:lambda_1z_nlambda_0g}, or~\ref{alg:basic:G_0} are passed, then we update $z$ and $w$, and the set $\cG(z)$ decreases in size by at least one. Because $1 \leq |\cG(z)| \leq n$, this process must terminate in at most $n$ outer loops.
%
%
%
\iftechreport
  \qed
\else 
\fi 
\end{proof}

The naive implementation of Algorithm~\ref{alg:basic}  has worst case complexity bounded above by $O(n^2\log(n))$ because we must continually sort the ratios in Step~\ref{alg:basic:step:r} and update the vectors $z$ and $w$ through averaging in Algorithm~\ref{alg:basic}. However, it is possible to keep careful track of $\lambda_0, \lambda_1, r, z$, and $w$ and get an $O(n\log(n))$ implementation of Algorithm~\ref{alg:basic}.  In order to prove this, we need to use a data-structure that is similar to a relational database. 

\begin{theorem}[Complexity of Algorithm~\ref{alg:basic}]\label{thm:convergence}
There is an $O(n\log(n))$ implementation of Algorithm~\ref{alg:basic}.
\end{theorem}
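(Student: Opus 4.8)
The plan is to show that each of the $O(n)$ outer loops of Algorithm~\ref{alg:basic} can be executed in $O(\log n)$ amortized time, after an initial $O(n\log n)$ preprocessing step, by maintaining all the relevant quantities incrementally rather than recomputing them from scratch. First I would set up the bookkeeping: represent the current partition $\cG(z)$ by storing, for each block $G$, the four aggregates $|G|$, $\sum_{k\in G} z_k$ (equivalently the common averaged value $(z_\cG)_i$), the common weight value $(w_\cG)_i$, and whether the block's value exceeds $z_n$ (i.e.\ whether it contributes to $\lambda_0$). Alongside I would maintain running totals $\sum_i z_i w_i$, $\|w\|^2$, $\sum_{\{i\mid z_i>z_n\}} z_i w_i$, and $\sum_{\{i\mid z_i>z_n\}} w_i^2$, so that $\lambda_0$ and $\lambda_1$ are available in $O(1)$ at the top of each loop. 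The ratios $\frac{z_i-z_{i+1}}{w_i-w_{i+1}}$ at block boundaries would be kept in a min-heap (or balanced BST) keyed by value, from which $r$ in Step~\ref{alg:basic:step:r} is read in $O(1)$ and extracted in $O(\log n)$; this is the ``relational database''-like structure alluded to in the text.

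The key observation making the updates cheap is that when a non-terminating test (\ref{alg:basic:lambda_1}, \ref{alg:basic:lambda_1z_nlambda_0g}, or~\ref{alg:basic:G_0}) is passed, the new partition $\cG'$ coarsens the old one by \emph{merging} blocks — never by splitting — and by Proposition~\ref{prop:incrgamma} the total number of merge events over the whole run is at most $n-1$. For tests~\ref{alg:basic:lambda_1} and~\ref{alg:basic:lambda_1z_nlambda_0g}, the new partition is $\cG(z-\lambda w)$ for $\lambda\in\{\lambda_1,\lambda_0\}\ge r$, and I claim the blocks that merge are exactly those adjacent pairs whose boundary ratio equals $r$ (together with any chains of equalities these create): one pops all heap entries with key $\le\lambda$ — but since $\lambda\ge r$ and by the structure of the argument one can show only the minimal ones are relevant, a careful accounting shows each pop corresponds to a genuine block merge. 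For test~\ref{alg:basic:G_0}, the merge is the single coarsening that fuses every block meeting $\{n',\dots,n\}$ into one block $\{n',\dots,n\}$; locating $n'$ costs $O(\log n)$ via the stored per-block data. Each individual block merge updates the aggregates of the two (or few) blocks involved in $O(1)$, adjusts the four running totals in $O(1)$, and inserts/deletes $O(1)$ heap entries for the new boundary in $O(\log n)$. Since there are $\le n-1$ merges total and $\le n$ outer loops each doing $O(\log n)$ heap work plus $O(1)$ arithmetic, the aggregate cost is $O(n\log n)$. The terminating branches each require one final $O(n)$ pass (the simplex projection of Proposition~\ref{prop:simplex}, or forming $z-\lambda_1 w$, or $\max\{z-\lambda_0 w,0\}$), which is absorbed into the bound.

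I expect the main obstacle to be the amortized analysis of the merge step, specifically verifying that the ratio min-heap can be kept consistent under the averaging operation $z\mapsto z_{\cG'}$, $w\mapsto w_{\cG'}$: when two blocks merge, their internal ratios disappear and exactly one new boundary ratio (between the merged block and its successor) must be recomputed from freshly averaged values, and one must argue that no \emph{other} boundary ratios change — this follows because averaging within a block does not alter the endpoint values seen by neighbouring blocks, but it needs to be stated carefully. A secondary subtlety is confirming that when $\lambda_1>r$ or $\lambda_0>r$ strictly, the coarsening $\cG(z-\lambda w)$ is still captured purely by popping heap entries with the smallest keys rather than all keys below $\lambda$; here one invokes the ``In addition'' clause of Proposition~\ref{prop:incrgamma} guaranteeing $|\cG(z_{\cG(z-\lambda w)})|\le|\cG(z)|-1$, and then re-enters the loop — so over-merging is harmless because it only accelerates termination while each merge is still paid for once. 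Once these points are nailed down, the complexity bound is a routine sum.
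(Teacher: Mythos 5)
Your proposal is correct and follows essentially the same route as the paper: maintain per-group aggregates and boundary ratios in an ordered structure (the paper uses a two-view multi-index container where you use a min-heap plus positional links), keep running totals so that $\lambda_0$ and $\lambda_1$ are available in constant time, and charge all update work to block merges, of which there are at most $n-1$, giving the telescoping bound $O(n\log n + \sum_i (m_i - m_{i+1})\log n) = O(n\log n)$. The subtleties you flag (cascading merges after averaging, and over-merging when $\lambda > r$ strictly) are resolved exactly as you suggest and are likewise left implicit in the paper's proof.
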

\begin{proof}
The key idea is to introduce a data structure $T_\cG = \{t_{G_1}, \ldots, t_{G_g}\}$ consisting of $5$-tuples, one for each group in a given partition $\cG = \{G_1, \ldots, G_g\}$:
\begin{align*}
\forall i \in \{1, \ldots, g\} \quad\quad t_{G_i} := (r_{G_i}, \min(G_i), \max(G_i), S(G_i, z), S(G_i, w)) \in \vR \times \vN \times \vN \times \vR^2
\end{align*}
where for any vector $x \in \vR^n$, we let $S(G, x) = \sum_{i\in G} x_i$, and the ratios $r_G$ are defined by
\begin{align*}
\forall i \in \{1, \ldots, g-1\} \quad\quad r_{G_i} := \frac{ \frac{S(G_i, z)}{|G_i|} 
- \frac{S(G_{i+1}, z)}{|G_{i+1}|} }{ 
\frac{S(G_i, w)}{|G_i|} 
- \frac{S(G_{i+1}, w)}{|G_{i+1}|} }, && \text{and} && r_{G_g} = \infty.
\end{align*}
Notice that $S(G, z) = S(G, z_\cG)$ and $S(G, w) = S(G, w_\cG)$. We assume that the data structure $T_\cG$ maintains 2 ordered-set views of the underlying tuples $t_G$, one of which is ordered by $r_G$, and another that is ordered by $\min(G)$. We also assume that the data structure allows us to convert iterators between views in constant time. This ensures that we can find the position of $t_G$ with $G \in \argmin\{r_G \mid G \in \cG\}$ in the view ordered by $r_G$ in time $O(\log(|\cG|)$ and convert this to an iterator (at the tuple $t_G$) in the view ordered by $\min(G)$ in constant time. We also assume that the ``delete," ``find," and ``insert," operations have complexity $O(\log(|\cG|))$. We note that this functionality can be implemented with the Boost Multi-Index Containers Library~\cite{boost}.

Now, the first step of Algorithm~\ref{alg:basic} is to build the data structure $T_{\cG(z)}$, which requires $O(n\log(n))$ operations. The remaining steps of the algorithm simply modify $T_{\cG(z)}$ by merging and deleting tuples. Suppose that Algorithm~\ref{alg:basic} terminates in $K$ steps for some $K \in \{1, \ldots, n\}$. For $i = 1, \ldots, K$, let $\cG_i$ be partition at the current iteration, and let $m_i = |\cG_i|$. Notice that for $i < K$, we have $\cG_{i} \preccurlyeq \cG_{i+1}$, so we get $\cG_{i+1}$ by merging groups in $\cG_i$, and $m_i > m_{i+1}$.  Finally, we also maintain two numbers throughout the algorithm: $I_{\cG_i} = \dotp{z_{\cG_i}, w_{\cG_i}}$ and $N_{\cG_i} = \|w_{\cG_i}\|^2$. Given $I_{\cG_i}$ and $N_{\cG_i}$, we can compute $\lambda_1$ and $\lambda_0$ in constant time.

Now fix $i \in \{1, \ldots, K-1\}$. Suppose that we get from iteration $i$ to $i+1$ through one of the updates $\cG_{i+1} = \cG(z_{\cG_i} - \lambda_1 w_{\cG_i})$ or $\cG_{i+1} = \cG(z_{\cG_i} - \lambda_0 w_{\cG_i})$. We note that each of these updates to $T_{\cG_i}$ can be performed in at most $O((m_i - m_{i+1})\log(m_i))$ steps because we call at most $O(m_i - m_{i+1})$ ``find", ``insert", ``delete", and ``merge" operations on the structure $T_{\cG_i}$ to get $T_{\cG_{i+1}}$, and at most $O(m_i - m_{i+1})$ modifications to the variables $I_{\cG_i}$ and $N_{\cG_i}$ to get $I_{\cG_{i+1}}$ and $N_{\cG_{i+1}}$. Likewise, it is easy to see that modifications of the form $\cG_{i+1} \leftarrow \cG_0(z_{\cG_i})$ can be implemented to run in $O((m_i - m_{i+1})\log(m_i))$ time.

Therefore, the total complexity of Algorithm~\ref{alg:basic} is 
\begin{align*}
O\left(n \log(n) + \sum_{i=1}^K (m_i - m_{i+1}) \log(m_i)\right) &= O\left(n \log(n) + \sum_{i=1}^K (m_i - m_{i+1}) \log(n)\right) \\
&= O(n \log(n)).
\end{align*}
\iftechreport
  \qed
\else 
\fi 
\end{proof}

\section{Numerical Results}\label{sec:numerical}
In this section we present some numerical experiments to demonstrate the utility of the OWL norm and test our C++ implementation and MATLAB MEX file wrapper. 

\subsection{Synthetic Regression Test}
We adopt and slightly modify the experimental set up of~\cite[Section V.A]{zeng2014ordered}. We choose an integer $d \geq 1$, and generate a vector 
\begin{align*}
x_{\mathrm{true}} :=(\underbrace{0, \ldots 0}_{150d},\underbrace{3, \ldots, 3}_{50d}, \underbrace{0, \ldots 0}_{250d}, \underbrace{-4, \ldots, -4}_{50d},  \underbrace{0, \ldots 0}_{250d}, \underbrace{6, \ldots, 6}_{50d}, \underbrace{0, \ldots 0}_{250d})^T\in \vR^{1000d}.
\end{align*}
We generate a random matrix $A = [A_1, \ldots, A_{1000d}] \in \vR^{1000d \times 1000d}$ where the columns $A_i \in \vR^{1000d}$ follow a multivariate Gaussian distribution with $\text{cov}(A_i, A_j) = .8^{|i -j|}$ after which the columns are standardized and centered. Then we generate a measurement vector $b = Ax_{\mathrm{true}} + \nu$ where $\nu$ is Gaussian noise with variance $.01$. Next we generate $w$ with OSCAR parameters $\mu_1 = 10^{-3}$ and $\mu_2^{-5}$ (See Equation~\eqref{eq:OSCAR}).  Finally, we set $\varepsilon = \Omega_w(x_{\mathrm{true}})$.  

To test our implementation, we solve the regression problem
\begin{align}\label{eq:OWLREGI}
\Min_{x\in \vR^n} \frac{1}{2}\|Ax - b\|^2 \quad\quad \text{subject to: }  \Omega_w(x)\leq \varepsilon. 
\end{align}
with three different proximal splitting algorithms. We plot the results in Figure~\ref{fig:synthetictest}. 

\begin{figure}
\centering
\subcaptionbox{%
    \label{fig:synthetictest:subfig:itvsobj}%
}
[%
    0.45\textwidth 
]%
{%
    \includegraphics[width=0.45\textwidth]%
    {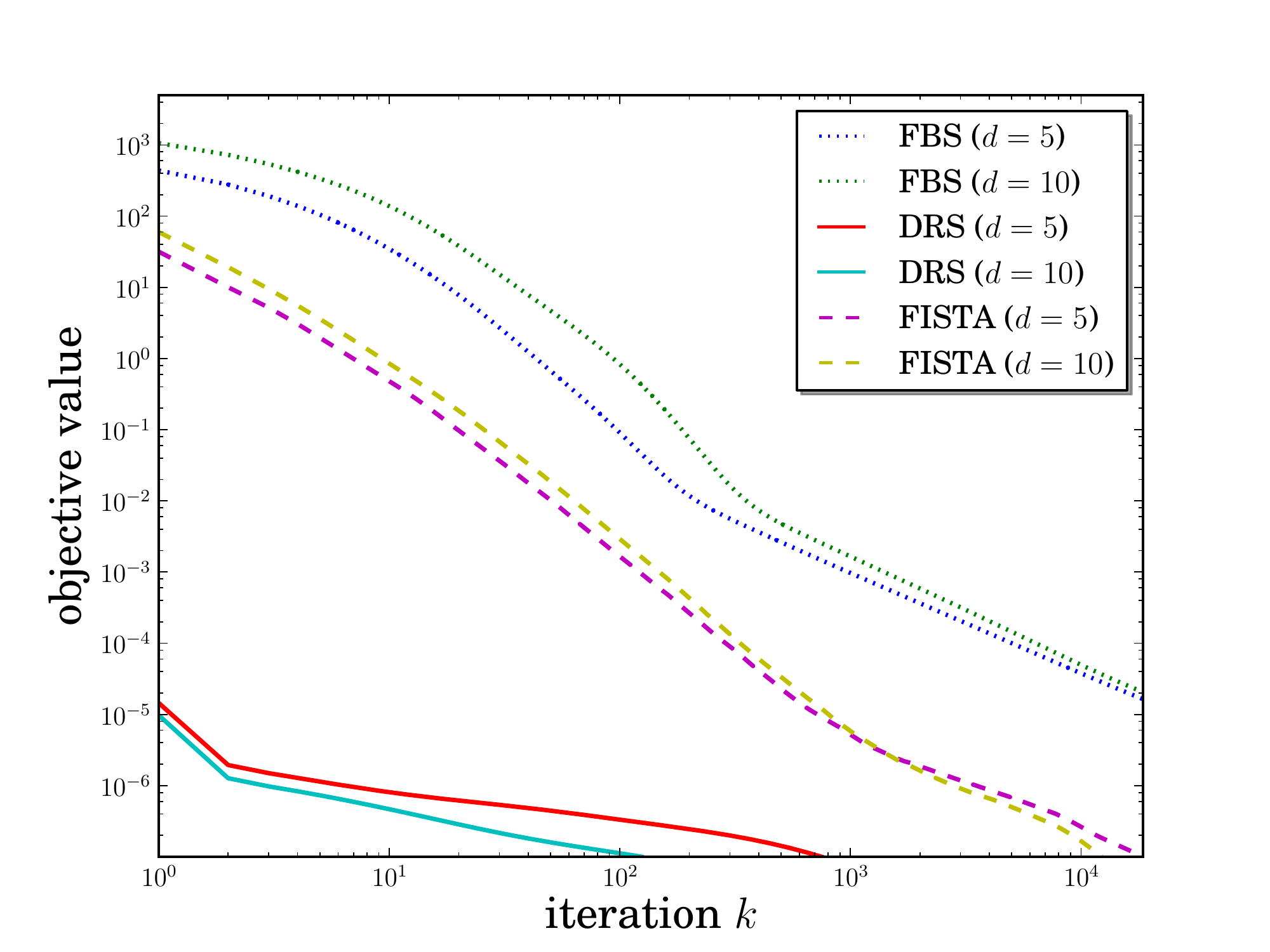}%
}%
\hspace{0.03\textwidth} 
\subcaptionbox{%
    \label{fig:synthetictest:subfig:sublabel:timevsobj}%
}
[%
    0.45\textwidth 
]%
{%
   \includegraphics[width=0.45\textwidth]%
   {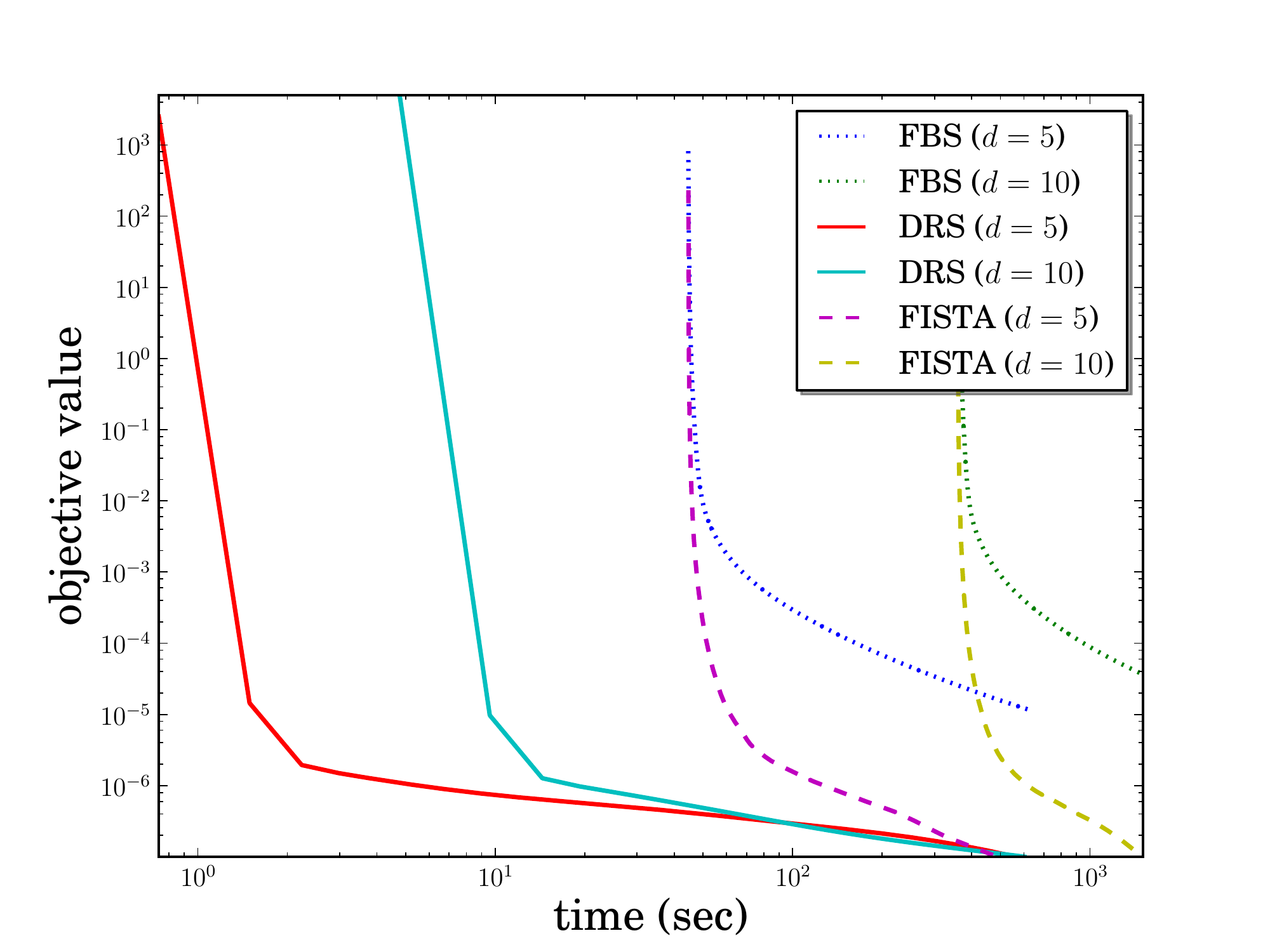}%
}%

\caption[Short Caption]{We solve Problem~\eqref{eq:OWLREGI} for $d=5,10$ with Douglas-Rachford splitting (DRS)~\cite{lions1979splitting}, Forward-Backward splitting (FBS)~\cite{passty1979ergodic}, and an accelerated forward-backward splitting method (dubbed FISTA~\cite{fista}). Note that the optimal objective value is $0$ because $\varepsilon = \Omega_{w}(x_{\mathrm{true}})$. In Figure~\ref{fig:synthetictest:subfig:sublabel:timevsobj}, there is a delay in the FBS and FISTA methods due to an initial investment in computing $\|A\|$, which is quite expensive. The test was run on a PC with 32GB memory and an Intel i5-3570 CPU with Ubuntu 12.04 and Matlab R2011b.}
\label{fig:synthetictest}
\end{figure}

\subsection{Standalone Tests}

In Table~\ref{tab:gaussian} we display the timings for our MATLAB MEX implementation of Algorithm~\ref{alg:basic}. Note that solutions to~\eqref{eq:OWLREGI} can be quite sparse (although usually not as sparse as solutions to~\eqref{eq:l1}). Thus, the iterates generated by algorithms that solve~\eqref{eq:OWLREGI}, such as those applied in Figure~\ref{fig:synthetictest}, are sparse as well. Thus, we test our implementation on high-dimensional vectors of varying sparsity levels.
\begin{center}
\begin{table}
\centering
    \begin{tabular}{lllll}
    \toprule
     Density & \multicolumn{4}{c}{length $n$} \\
\toprule
 & $10^3$ & $10^4$ & $10^5$ & $10^6$  \\ \toprule
    $100\%$ & $3.6$e-$04$ & $5.1$e-$03$ & $6.8$e-$02$ & $1.6$  \\\midrule
    $50\%$ & $2.1$e-$04$ & $3.1$e-$03$ & $3.8$e-$02$ & $8.3$e-$01$ \\  \midrule
    $25\%$ & $1.1$e-$04$ & $1.6$e-$03$ & $2.0$e-$02$ &$3.7$e-$01$ \\ \midrule
    $10\%$ & $5.6$e-$05$ & $8.5$e-$04$ & $1.0$e-$02$ & $1.4$e-$01$ \\
    \bottomrule
    \end{tabular}
 \caption{Average timings in seconds (over 100 runs) for random Gaussian vectors with different density levels (measured in percentage of nonzero entries). The test was run on a PC with 32GB memory and an Intel i5-3570 CPU with Ubuntu 12.04 and Matlab R2011b.}\label{tab:gaussian}
\end{table}

\end{center}

\section{Conclusion}
In this paper, we introduced an $O(n\log(n))$ algorithm to project onto the OWL norm ball. Previously, there was no algorithm to compute this projection in a finite number of steps. We also evaluated our algorithm with a synthetic regression test. A C++ implementation of our algorithm with a MEX wrapper, \iftechreport
is 
\else 
will be made 
\fi 
available at the authors' website. 

\section*{Acknowledgement}
We thank Professor Wotao Yin for reading this draft and suggesting several improvements, Professor Robert Nowak for introducing us to the OWL norm,  Zhimin Peng for discussing code implementation issues with us, and Professor M{\'a}rio Figueiredo for trying out our code. 

\bibliographystyle{spmpsci}
\bibliography{bibliography}

\section*{Appendix}
\appendix
\section{Proof of Proposition~\ref{prop:equivalence}}\label{app:prop:equivalence}
First we prove a simple fact that we will use throughout the following proofs. Intuitively, it states that $\cG_1 \preccurlyeq \cG_2$ if, and only if, $\cG_2$ does not split groups in $\cG_1$.
\begin{lemma}[Equivalent conditions for nested partitions]\label{lem:equivSucc}
Let $\cG_1, \cG_2 \in \cP_n$. Then $\cG_1 \preccurlyeq \cG_2$ if, and only if, for every $i  \in \{1, \ldots, n\}$ such that there exists a group $G_1 \in \cG_1 $ with $i, i+1 \in G_1$, there exists a group $G_2$ such that $i, i+1 \in G_2$.
\end{lemma}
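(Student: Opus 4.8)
The plan is to prove the two implications separately; the forward direction is essentially immediate from the definition of $\preccurlyeq$, and the reverse direction needs exactly one observation exploiting that the groups of $\cG_1$ are intervals.

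For the forward direction, I would assume $\cG_1 \preccurlyeq \cG_2$ and take any $i \in \{1, \ldots, n\}$ for which some $G_1 \in \cG_1$ contains both $i$ and $i+1$. By the definition of $\preccurlyeq$ there is a group $G_2 \in \cG_2$ with $G_1 \subseteq G_2$, and hence $i, i+1 \in G_2$. This is nothing more than unwinding the definition.

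For the reverse direction, I would fix an arbitrary group $G_1 \in \cG_1$ and, using $\cG_1 \in \cP_n$, write it as an interval $G_1 = \{a, a+1, \ldots, b\}$. For each $i \in \{a, \ldots, b-1\}$ the consecutive pair $i, i+1$ lies in $G_1$, so the hypothesis furnishes a group of $\cG_2$ containing both $i$ and $i+1$; since $\cG_2$ is a partition, this group is the \emph{unique} group of $\cG_2$ containing $i$, and it also contains $i+1$. Writing $\phi(j)$ for the unique element of $\cG_2$ containing $j$, the previous sentence says $\phi(i) = \phi(i+1)$ for every $i \in \{a, \ldots, b-1\}$, whence $\phi(a) = \phi(a+1) = \cdots = \phi(b)$. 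Calling this common group $G_2 \in \cG_2$, we conclude $G_1 = \{a, \ldots, b\} \subseteq G_2$. Since $G_1 \in \cG_1$ was arbitrary, this establishes $\cG_1 \preccurlyeq \cG_2$.

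There is no real obstacle here: the only structural fact used is that each group of $\cG_1$ is an integer interval, so that knowing every consecutive pair inside $G_1$ stays together under $\cG_2$ forces all of $G_1$ to stay together; the argument does not even require the interval structure of $\cG_2$, only that it is a partition (for uniqueness of the containing group).
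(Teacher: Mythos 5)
Your proof is correct and follows essentially the same route as the paper's: the forward direction is immediate from the definition of $\preccurlyeq$, and the reverse direction chains together the groups of $\cG_2$ containing consecutive pairs of a $\cG_1$-interval, using uniqueness of the containing group in a partition (the paper phrases this as adjacent candidate groups intersecting, hence coinciding). Your handling of singleton groups via $\phi(a)$ is implicit where the paper treats $|G_1|=1$ as a separate trivial case, but this is an inessential difference.
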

\begin{proof}[Proof of lemma]
$\implies$: This direction is clear by definition of $\preccurlyeq$.

$\impliedby$: Suppose that $G_1 = \{i_1, \ldots, i_k\} \in \cG_1$ for some $k \geq 1$. If $|G_1| = 1$, the partition property implies there exists $G_2 \in \cG_2$ containing $G_1$. Suppose $|G_1| > 1$. For each $i_j$ with $j = 1, \ldots, k-1$, there exists $G_2^j \in \cG_2$ with $i_j, i_{j+1} \in G_2^j$. Notice that each of the adjacent $G_2^j$ sets intersect: $i_{j}\in G_2^{j-1} \cap G_2^{j}$ for $j = 2, \ldots, k-1$. Thus, by the partition property, all $G_2^j$ are the same and hence, $G_1 \subseteq G_2^j$ for any such $j$. Thus, $\cG_1 \preccurlyeq \cG_2$.
\iftechreport
  \qed
\else 
\fi 
\end{proof}

Part~\ref{prop:equivalence:part:1}: Let $i \in \{1, \ldots, n\}$. Suppose that $z_i  = z_{i+1}$. Then $z_i - z_{i+1} = 0 \leq \lambda(w_i - w_{i+1})$, i.e., $z_i - \lambda w_{i} \leq z_{i+1} - \lambda w_{i+1}$. Therefore, by Lemma~\ref{lem:equivSucc}, we have $\cG(z) \preccurlyeq \cG(z - \lambda w)$. 

Next, suppose that $z_i - \lambda w_{i} \leq z_{i+1} - \lambda w_{i+1}$ where $z_i$ is not necessarily equal to $z_{i+1}.$ Then $i$ and $i+1$ are in the same group in $\cG(z - \lambda w)$. Thus, by Equation~\eqref{eq:averagedidentity}, we have $(z_{\cG(z - \lambda w)})_i = (z_{\cG(z - \lambda w)})_{i+1}$. Therefore, by Lemma~\ref{lem:equivSucc}, we have  $\cG(z - \lambda w) \preccurlyeq \cG(z_{\cG(z - \lambda w)})$. Conversely, suppose that $(z_{\cG(z - \lambda w)})_i = (z_{\cG(z - \lambda w)})_{i+1}$, but $z_i - \lambda w_i > z_{i+1} - \lambda w_{i+1}$. Then $i$ and $i+1$ are not in the same group in $\cG(z - \lambda w)$ and, in particular, $z_i - z_{i+1} > \lambda(w_i - w_{i+1}) \geq 0$. Thus, because $z\in \cT$, we have $(z_{\cG(z - \lambda w)})_i \geq z_i > z_{i+1} \geq (z_{\cG(z - \lambda w)})_{i+1}$, which is a contradiction. Therefore, by Lemma~\ref{lem:equivSucc}, we have $\cG(z - \lambda w) \succcurlyeq \cG(z_{\cG(z - \lambda w)})$, and so $\cG(z - \lambda w) = \cG(z_{\cG(z - \lambda w)})$.

Finally, suppose that there exists $i \in \{1, \ldots, n-1\}$ such that $z_i - \lambda w_i \leq z_{i+1} - \lambda w_{i+1}$. Then by Proposition~\ref{prop:optimality},
\begin{align*}
x_i^\ast - x_{i+1}^\ast := (z_i - \lambda^\ast w_i) - (z_{i+1} - \lambda^\ast w_{i+1}) + 2v^\ast_i - (v^\ast_{i-1} + v^\ast_{i+1}).
\end{align*}
If $x_i^\ast \neq x_{i+1}^\ast$, then $ 2v^\ast_i = 0$ so the expression on the left is nonpositive, which is a contradiction. Thus, $x_i^\ast = x_{i+1}^\ast$. Therefore, by Lemma~\ref{lem:equivSucc}, we have $\cG(z_{\cG(z - \lambda w)}) = \cG(z - \lambda w) \preccurlyeq \cG^\ast$. 

Part~\ref{prop:equivalence:part:2}:  
Note that
\begin{align*}
\dotp{w_{\cG}, x^\ast} = \sum_{G \in \cG} \sum_{i \in G} x_i^\ast\left( \frac{1}{|G|} \sum_{j \in G} w_j\right) = \dotp{w, x^\ast} = \varepsilon
\end{align*}
because $x^\ast$ is constant along each group $G$. Thus, $x^\ast \in H(w_{\cG}, \varepsilon)\cap \cT$. Let $x^0 = P_{H(w_{\cG}, \varepsilon) \cap \cT}(z_{\cG})$. We will show that $x^\ast = x^0$. Indeed, $\cG  = \cG(z_{\cG}) \preccurlyeq \cG(x^0)$ and 
\begin{align*}
\dotp{w, x^0} =  \sum_{G \in \cG} \sum_{i \in G} x^0_i\left( \frac{1}{|G|} \sum_{j \in G} w_j\right)=  \dotp{w_{\cG}, x^0} = \varepsilon
\end{align*}
because $x^0$ is constant along each group. Therefore, $x^0 \in H(w, \varepsilon) \cap \cT$.  In addition, for all $G \in \cG$, we have $x^0_i = x^0_j$ for all $i,j \in G$; let $x^G$ denote $x^0_i$ for any $i \in G$. Therefore, 
\begin{align*}
\|z - x^\ast\|^2 \leq \|z - x^0\|^2 = \sum_{G \in \cG} \sum_{i \in G}\left(z_i - x^G\right)^2 &= \sum_{G \in \cG}\left(\frac{1}{2|G|} \sum_{i, j \in G} (z_i - z_j)^2 + \sum_{i \in G} \left(z_{\cG} - x^G\right)^2\right) \\
&\leq \sum_{G \in \cG}\left(\frac{1}{2|G|} \sum_{i, j \in G} (z_i - z_j)^2 + \sum_{i \in G} \left(z_{\cG} - x_i^\ast\right)^2\right) \\
&= \|z - x^\ast\|^2
\end{align*}
Thus, $\|z - x^\ast\| = \|z - x^0\|$, so by the uniqueness of the projection, we have $x^0 = x^\ast$.


\section{Proof of Proposition~\ref{prop:incrgamma}}\label{app:prop:incrgamma}

First note that because $\dotp{w, x^\ast} = \varepsilon$,  Proposition~\ref{prop:optimality} implies that
\begin{align*}
 \lambda^\ast &= \frac{\sum_{i=1}^n z_i w_i - \varepsilon + \sum_{i=1}^{n-1} v^\ast_i(w_i - w_{i+1}) + v_n^\ast w_n^\ast}{\|w\|^2} \geq \lambda_1. \numberthis\label{eq:lambda1ident}
\end{align*}
because $\sum_{i=1}^{n-1} v^\ast_i(w_i - w_{i+1}) + v_n^\ast w_n^\ast \geq 0$.

Part~\ref{prop:incrgamma:simplex}: Suppose $r = \infty$. Then $w$ is a constant vector. Thus, the result follows from Proposition~\ref{prop:simplex}.

Part~\ref{prop:incrgamma:lambda_1}: Suppose that $\lambda_1 > r$. Then $\lambda^\ast > r$ by Equation~\eqref{eq:lambda1ident}.

Part~\ref{prop:incrgamma:lambda_1z_n}: Suppose that $\infty > r \geq \lambda_1$ and $z_n - \lambda_1 w_n \geq 0$. Then $z - \lambda_1 w \in \cT$ and $x^0 = z - \lambda_1 w$ satisfies the conditions of Proposition~\ref{prop:optimality} with $v^\ast = 0$ and $\lambda^\ast = \lambda_1$. Thus, $x^\ast = z - \lambda_1 w$.

Part~\ref{prop:incrgamma:lambda_1z_nlambda_0g}: Suppose that $\infty > r \geq \lambda_1,$ $z_n - \lambda_1 w_n < 0$, and $\lambda_0 > r$. Then, $z_n - \lambda^\ast w_n \leq z_n - \lambda_1 w_n < 0$.  From $x_n^\ast = z_i - \lambda^\ast w_n + v^\ast_n - v^\ast_{i-1} < v^\ast_n$, and $v^\ast_nx_n^\ast = 0$, we have $x_n^\ast = 0$. Next, because $\cG(z) \preccurlyeq \cG^\ast$, we have $\{i \mid z_i = z_n\} \subseteq \{i \mid x_i^\ast = x_n^\ast\} = \{i \mid x_i^\ast =0\}$ and so $\{i \mid z_i > z_n \} \supseteq \{i \mid x_i^\ast > 0\}$. Let $k_0 = \max\{i \mid z_i > z_n \}$. Therefore, from $\sum_{\{i \mid z_i > z_n\}} x_i^\ast w_i= \sum_{\{i \mid x_i^\ast > 0\}} x_i^\ast w_i  = \varepsilon$ and Proposition~\ref{prop:optimality}, we have
\begin{align*}
 \lambda^\ast &=  \frac{\sum_{\{i \mid z_i > z_n\}} z_i w_i - \varepsilon + \sum_{\{i \mid z_i > z_n\}} v^\ast_i(w_i - w_{i+1}) + v_{k_0}w_{k_0 + 1}}{\sum_{\{i \mid z_i > z_n\}} w_i^2} \geq \lambda_0 > r \numberthis\label{eq:lambda0ident}
\end{align*}
where we use the bound $\sum_{\{i \mid z_i > z_n\}} v^\ast_i(w_i - w_{i+1}) + v_{k_0}w_{k_0 + 1} \geq 0$. Notice that $x_n^\ast = 0$ and the first inequality in Equation~\eqref{eq:lambda0ident} holds whether or not $\lambda_0 > r$: we just need $\infty > r \geq \lambda_1$ and $z_n - \lambda_1 w_n < 0$. We will use this fact in Part~\ref{prop:incrgamma:G_0} below.

Part~\ref{prop:incrgamma:lambda_1z_nlambda_0l}: Suppose that $\infty > r \geq \lambda_1,$ $z_n - \lambda_1 w_n < 0$, $r \geq \lambda_0$, $n' \leq n$ and $z_{n'} = z_n$. Then $\max\{z - \lambda_0 w, 0\} \in \cT$. In addition, we have $\dotp{w, \max\{z- \lambda_0 w, 0\}} = \varepsilon$ by the choice of $\lambda_0$. We will now define a vector $v \in \vR^n_{+}$ recursively: If $z_i > z_n$, set $v_i = 0$; otherwise set $v_i = v_{i-1} - (z_i - \lambda_0 w_i)$. We can satisfy the optimality conditions of Proposition~\ref{prop:optimality} with $\lambda^\ast = \lambda_0$ and $v^\ast = v$. Thus, $x^\ast = \max\{z - \lambda_0 w, 0\}$.

Part~\ref{prop:incrgamma:G_0}: Suppose that $\infty > r \geq \lambda_1,$ $z_n - \lambda_1 w_n < 0$, $r \geq \lambda_0$, $n' < n$ and $z_{n'} \neq z_n$.  From the proof of Part~\ref{prop:incrgamma:lambda_1z_nlambda_0g} we have $z_{k} - \lambda^{\ast} w_{k} \leq z_{k} - \lambda_0w_{k}< 0$ for all $k = n', n' + 1, \ldots, n$ (from $\lambda^\ast \geq \lambda_0$) and $x_{n}^\ast = 0$. Suppose that $x_{n'}^\ast \neq x_n^\ast = 0$. Let $n'' = \min\{ k \mid x_k^\ast = 0\}$. Then $n'' -1 \geq n' \geq 1$.  Thus because $x_{n'' -1}^\ast \neq x_{n''}^\ast = 0$, we have $v^\ast_{n'' - 1}= 0$ and $x_{n'' - 1}^\ast = z_{n'' - 1} - \lambda^\ast w_{n'' - 1} - v^\ast_{n'' - 2} < 0$ (where we let $v_{n'' - 2}^\ast = 0$ if $n'' = 2$). This is a contradiction because $x^\ast \in \cT$. Thus, $x_{n'}^\ast = x_{n' + 1}^\ast = \ldots = x_n^\ast = 0$.   If $n' = 1$, then we see that $\cG(z) \preccurlyeq \cG_0 \preccurlyeq \cG^\ast$. Furthermore, if $n' > 1$, then we claim that $n' -1$ and $n'$ are not in the same group in $\cG(z)$, i.e., that $z_{n'-1} \neq z_{n'}$. Indeed, if $z_{n'-1} = z_{n'}$, then $w_{n'-1} = w_{n'}$ and hence, $z_{n' -1} - \lambda_0 w_{n'-1} = z_{n'} - \lambda_0 w_{n'} < 0$, which is a contradiction. 

Thus, this argument has shown that $\cG(z) = \{G\in \cG(z) \mid \max(G) < n'\} \cup \{G \in \cG(z) \mid \min(G) \geq n'\}$ and there exists $G_2 \in \cG^\ast$ with $\{n', \ldots, n\} \subseteq G_2$. Note that the first of these identities implies that $\cG_0\in \cP_n$. Let us now prove the claimed nestings: $\cG(z) \preccurlyeq  \cG_0 = \cG(z_{\cG_0}) \preccurlyeq \cG^\ast$.
\begin{enumerate}
\item $(\cG(z) \preccurlyeq  \cG_0)$: 
Suppose that $G \in \cG(z)$. If $\max(G) < n'$, then $G \in \cG_0$. If $\min(G) \geq n'$, then $G \subseteq \{n', \ldots, n\} \in \cG_0$. Thus, $\cG(z) \preccurlyeq \cG_0$. 
\item $(\cG_0 = \cG(z_{\cG_0}))$: The identity follows because
\begin{align*}
(z_{\cG_0})_i = \begin{cases}
z_i &\text{if } i < n'; \\
\frac{1}{n - n' + 1} \sum_{i=n'}^n z_i & \text{if } i \geq n'.
\end{cases}
\end{align*}
%
\item $(\cG_0 \preccurlyeq \cG^\ast)$: Suppose that $G \in \cG_0$. If $\max(G) < n'$, it follows that $G \in \cG(z)$ and hence by Part~\ref{prop:equivalence:part:1} of Proposition~\ref{prop:equivalence}, there is a $G_2 \in \cG^\ast$ with $G \subseteq G_2$. If $\min(G) \geq n'$, then $G = \{n', \ldots, n\}$ and there exists $G_2 \in \cG^\ast$ with $G \subseteq G_2$. Therefore, $G_0 \preccurlyeq \cG^\ast$. 
\end{enumerate}
%
%
Finally, note that $|\cG(z_{\cG_0})| = |\cG_0| \leq |\cG(z)| - 1$ because $z_{n'} \neq z_n$ implies that $\{G \in \cG(z) \mid \min(G) \geq n'\} \subseteq \cG(z)$ contains at least two distinct groups that are both contained in $\{n', \ldots,n\} \in \cG_0$. 

Part~\ref{prop:incrgamma:allfail}: Suppose that $\infty > r \geq \lambda_1,$ $z_n - \lambda_1 w_n < 0$, $ r > \lambda_0 $, and $n' = n+1$. Then $z_n - \lambda_0w_n \geq 0$. Thus, $\lambda_1 > \lambda_0$ and
\begin{align*}
\lambda_1 \left(\sum_{\{i \mid z_i > z_n\}} w_i^2 + \sum_{\{i \mid z_i = z_n\}} w_i^2\right) &= \left(\sum_{\{i \mid z_i > z_n\}} z_iw_i + \sum_{\{i \mid z_i = z_n\}} z_iw_i\right) - \varepsilon \\
&< \left(\sum_{\{i \mid z_i > z_n\}} z_iw_i -\varepsilon\right)  + \sum_{\{i \mid z_i = z_n\}} \lambda_1 w_i^2 \\
&= \lambda_0\left(\sum_{\{i \mid z_i > z_n\}} w_i^2 \right)+  \lambda_1 \sum_{\{i \mid z_i = z_n\}} w_i^2.
\end{align*}
where the strict inequality follows from $z_n < \lambda_1 w_n$. Thus,  $\lambda_1 < \lambda_0$, which is a contradiction.

The final conclusions of the proposition are simple consequence of Lemma~\ref{eq:equalcardinality}, Proposition~\ref{prop:equivalence}, and the 6 alternatives. 

\section{An Example}\label{app:example}

In this section, we project the point $z_0 = (3, 2, 1, -1, 2)$ onto the OWL ball of radius $\varepsilon = 1$ with weights $w_0 = (5, 4, 3, 1, 1)$.

\begin{itemize}
\item \textbf{Preprocessing.} 
\begin{itemize} 
\item Set $s := \sign(z_0) = (1,1, 1, -1, 1)^T$;
\item Set $z := Q(|z_0|)|z_0| = (3, 2, 2, 1, 1)^T$;
\item Set $\cG(z) \leftarrow \{\{1\}, \{2, 3\}, \{4, 5\}\}$;
\item Set $w := (w_0)_{\cG(z)} = (5, 7/2, 7/2, 1, 1)^T$;
\end{itemize}
\item \textbf{Iteration 1.}
\begin{itemize}
\item Set $$r \leftarrow \min\left\{\frac{3 - 2}{5 - 7/2}, \infty, \frac{2-1}{7/2 - 1},\infty \right\} = \frac{2}{5};$$
\item Set 
\begin{align*}
\lambda_0 \leftarrow \frac{28}{49.5} && \text{and} && \lambda_1 \leftarrow \frac{31}{51.5};
\end{align*}
\item Set $n' \leftarrow 6$;
\item Set $\cG_0(z) \leftarrow \cG(z)$;
\item \textbf{Test~\ref{alg:basic:lambda_1} passed:} $\lambda_1 = 31/51.5 > 2/5 = r$;
\begin{itemize}
\item Set $\cG(z - \lambda_1 w) \leftarrow \{\{1\}, \{2, 3, 4, 5\}\}$;
\item Set $z \leftarrow z_{\cG(z - \lambda_1 w)} = (3, 3/2, 3/2, 3/2, 3/2)^T$;
\item Set $w \leftarrow w_{\cG(z - \lambda_1 w)} = (5, 9/4, 9/4, 9/4, 9/4)^T$;
\end{itemize}
\end{itemize}
\item \textbf{Iteration 2.}
\begin{itemize}
\item Set $$r \leftarrow \min\left\{\frac{3 - 3/2}{5 - 9/4}, \infty, \infty, \infty \right\} = \frac{12}{22};$$
\item Set 
\begin{align*}
\lambda_0 \leftarrow \frac{14}{25} && \text{and} && \lambda_1 \leftarrow \frac{27.5}{45.25};
\end{align*}
\item Set $n' \leftarrow 6$;
\item Set $\cG_0(z) \leftarrow \cG(z) = \{\{1\}, \{2, 3, 4, 5\}\}$;
\item \textbf{Test~\ref{alg:basic:lambda_1} passed:} $\lambda_1 = 27.5/45.25 > 12/22 = r$;
\begin{itemize}
\item Set $\cG(z - \lambda_1 w) \leftarrow \{\{1, 2, 3, 4, 5\}\}$;
\item Set $z \leftarrow z_{\cG(z - \lambda_1 w)} = (9/5, 9/5, 9/5, 9/5, 9/5)^T$;
\item Set $w \leftarrow w_{\cG(z - \lambda_1 w)} = (14/5, 14/5, 14/5, 14/5, 14/5)^T$;
\end{itemize}
\end{itemize}
\item \textbf{Iteration 3.}
\begin{itemize}
\item Set $r = \infty$;
\item \textbf{Test~\ref{alg:basic:simplex} passed:} (We use Proposition~\ref{prop:simplex} to finish.)
\begin{itemize}
\item Set $\lambda = 121/70$;
\item Set $x^\ast = \max\{z - \lambda , 0\} = (1/14, 1/14, 1/14, 1/14, 1/14)^T$;
\end{itemize}
\end{itemize}
\item \textbf{Undo preprocessing.} 
\begin{itemize}
\item Set $x_0^\ast = s \odot Q(|z_0|)^T x^\ast = (1/14, 1/14, 1/14, -1/14, 1/14)^T$;
\end{itemize}
\item \textbf{Terminate.}
\begin{itemize}
\item We have $P_{\cB(w_0, \varepsilon)}(z_0) = x_0^\ast$.
\end{itemize}
\end{itemize}

Notice that $x_0^\ast$ satisfies $\Omega_{w_0}(x_0^\ast) = 1$ because $\sum_{i=1}^5 (w_0)_i = 14.$

\end{document}